\numberwithin{equation}{theorem}
\newcommand{\D}{\displaystyle}
\DeclareMathOperator{\soc}{Soc}
\renewcommand{\m}{\mathfrak{m}}
\DeclareMathOperator{\Ass}{Ass}
\theoremstyle{theorem}
\begin{document}
\title{The category of $F$-modules has finite global dimension}
\author{Linquan Ma}
\address{Department of Mathematics\\ University of Michigan\\ Ann Arbor\\ Michigan 48109}
\email{lquanma@umich.edu}
\maketitle

\begin{abstract}
Let $R$ be a regular ring of characteristic $p>0$. In \cite{HochsterfinitenesspropertyofLyubeznikFmodule}, Hochster showed that the category of Lyubeznik's $F_R$-modules has enough injectives, so that every $F_R$-module has an injective resolution in this category. We show in this paper that under mild conditions on $R$, for example when $R$ is essentially of finite type over an $F$-finite regular local ring, the category of $F$-modules has finite global dimension $d+1$ where $d=\dim R$. In \cite{HochsterfinitenesspropertyofLyubeznikFmodule}, Hochster also showed that when $M$ and $N$ are $F_R$-finite $F_R$-modules, $\Hom_{F_R}(M,N)$ is finite. We show that in general $\Ext_{F_R}^1(M,N)$ is not necessarily finite.
\end{abstract}

\section{Introduction}
In \cite{HochsterfinitenesspropertyofLyubeznikFmodule}, Hochster showed some properties of Lyubeznik's $F$-modules:
\begin{theorem}[{\it cf.} Theorem 3.1 in \cite{HochsterfinitenesspropertyofLyubeznikFmodule}]
\label{Hochster existence of enough injectives}
The category of $F_R$-modules over a Noetherian regular ring $R$ of prime characteristic $p>0$ has enough injectives, i.e., every $F_R$-module can be embedded in an injective $F_R$-module.
\end{theorem}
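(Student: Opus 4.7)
The plan is to realize $F_R$-Mod as a Grothendieck abelian category, at which point enough injectives follows from Grothendieck's classical existence theorem. Three facts need to be verified: the category is abelian, it has all small colimits with filtered colimits exact, and it admits a generator.

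Abelianness and exact filtered colimits both rest on Kunz's theorem: regularity of $R$ is equivalent to flatness of the Frobenius $\phi\colon R\to R$, so the Peskine-Szpiro functor $F(-) = R\otimes_{\phi} -$ is exact on $R$-Mod. Given a morphism $\varphi\colon (M,\theta_M) \to (N,\theta_N)$ of $F_R$-modules, the $R$-module kernel and cokernel inherit structural maps $K\to F(K)$ and $C\to F(C)$ which remain isomorphisms precisely because $F$ preserves short exact sequences. Small colimits exist and inherit $F_R$-structure because $F$, being a tensor product, commutes with colimits; filtered colimits are exact since this holds in $R$-Mod and the forgetful functor to $R$-Mod reflects exactness.

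The crux is producing a generator. Fix a cardinal $\kappa \ge \max(\aleph_0,|R|)$; I claim every $F_R$-module $M$ is the directed union of its $F_R$-submodules of cardinality $\le\kappa$. Given $S\subseteq M$ with $|S|\le\kappa$, construct an $F_R$-submodule $N\supseteq S$ of size $\le\kappa$ by a countable saturation: put $N_0 = \langle S\rangle_R$ and inductively choose $N_{i+1}\subseteq M$ to be an $R$-submodule of cardinality $\le\kappa$ that contains $\theta_M^{-1}(F(N_i))$ and is large enough that $\theta_M(N_i)\subseteq F(N_{i+1})$. Such $N_{i+1}$ exists because $F$ commutes with filtered colimits over finitely generated submodules and $|F(N_i)|\le\kappa$. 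Setting $N=\bigcup_i N_i$, the two chains of containments give $\theta_M(N)\subseteq F(N)$ and $F(N)\subseteq\theta_M(N)$, so $\theta_M$ restricts to an isomorphism $N\to F(N)$, making $N$ an $F_R$-submodule of cardinality $\le\kappa$ containing $S$. Isomorphism classes of $F_R$-modules of cardinality $\le\kappa$ form a set, and a direct sum of representatives is a generator.

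The main obstacle is the saturation argument in the generator step: one must simultaneously close $N$ under both the structural isomorphism $\theta_M$ and its inverse while keeping cardinality bounded, which is what forces the two-sided inductive construction rather than a naive closure under $\theta_M$ alone. Every other step is essentially formal once Kunz's flatness of Frobenius is invoked, and with all three Grothendieck axioms in hand the theorem follows from the general construction of injective hulls in a Grothendieck abelian category.
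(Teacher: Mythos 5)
This statement is quoted from Hochster's paper (Theorem 3.1 there) and is not proved in the present article, so there is no in-paper proof to compare against. Your argument --- verifying that $F_R$-modules form a Grothendieck category using exactness of the Peskine--Szpiro functor (Kunz), and producing a generator via the two-sided cardinality-bounded saturation under $\theta_M$ and $\theta_M^{-1}$ --- is correct and is essentially the standard route, matching Hochster's original proof.
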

\begin{theorem}[{\it cf.} Theorem 5.1 and Corollary 5.2(b) in \cite{HochsterfinitenesspropertyofLyubeznikFmodule}]
\label{Hochster finiteness of Hom(M,N)}
Let $R$ be a Noetherian regular ring of prime characteristic $p>0$. Let $M$ and $N$ be $F_R$-finite $F_R$-modules. Then $\Hom_{F_R}(M,N)$ is a finite-dimensional vector space over $\mathbb{Z}/p\mathbb{Z}$ and, hence, is a finite set. Moreover, when $R$ is local, every $F_R$-finite $F_R$-module has only finitely many $F_R$-submodules.
\end{theorem}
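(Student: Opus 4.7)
The plan is to exploit the root structure of $F_R$-finite modules, reducing $\Hom_{F_R}(M,N)$ to a Frobenius fixed-point problem on a finitely generated $R$-module, to which a Hartshorne-Speiser-Lyubeznik (HSL) style finiteness applies.

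First I would fix roots $M_0 \subseteq M$ and $N_0 \subseteq N$: finitely generated $R$-submodules with injective $R$-linear structure maps $\beta \colon M_0 \to F^*M_0$ and $\gamma \colon N_0 \to F^*N_0$ whose colimits recover $M$ and $N$. Any $\phi \in \Hom_{F_R}(M,N)$ is determined by its restriction $\phi_0 \colon M_0 \to N$, and since $M_0$ is finitely generated $\phi_0$ factors through $F^{*e}N_0$ for some $e \geq 0$. The compatibility of $\phi$ with the $F_R$-structures then becomes a Frobenius-semilinear equation
\[
F^*\phi_0 \circ \beta \;=\; F^{*e}\gamma \circ \phi_0 \quad \text{in } \Hom_R(M_0, F^{*(e+1)}N_0),
\]
whose left side is Frobenius-semilinear in $\phi_0$ while the right side is $R$-linear in $\phi_0$.

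Let $S_e \subseteq \Hom_R(M_0, F^{*e}N_0)$ denote the $\F_p$-subspace cut out by this equation. The natural maps $S_e \hookrightarrow S_{e+1}$ (post-composition by $F^{*e}\gamma$) give an ascending chain with $\Hom_{F_R}(M,N) = \bigcup_e S_e$. For each fixed $e$, $S_e$ is the solution set of a Frobenius-semilinear equation on the finitely generated $R$-module $\Hom_R(M_0, F^{*(e+1)}N_0)$, so by an HSL-type argument each $S_e$ is a finite-dimensional $\F_p$-vector space.

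The main obstacle is to show that the chain $\{S_e\}$ stabilizes---equivalently, that $\dim_{\F_p} S_e$ is uniformly bounded in $e$. I would reduce to the case where $R$ is complete regular local by localizing at a suitable maximal ideal in $\Supp(M) \cup \Supp(N)$ and applying faithfully flat completion, and then use Matlis duality to convert the problem into one about finitely generated $R$-modules equipped with a Cartier ($p^{-1}$-linear) map. In this dualized setting $\Hom_{F_R}(M,N)$ corresponds to $\Hom$ in the category of Cartier modules, whose $\F_p$-dimension is finite by the classical bound on the fixed points of a $p^{-1}$-linear operator on a finitely generated $R$-module. Transporting this bound back through the duality and completion yields the desired uniform bound and hence the finiteness of $\Hom_{F_R}(M,N)$.

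For the moreover statement, an analogous strategy works: when $R$ is local, every $F_R$-submodule $M' \subseteq M$ corresponds (after a suitable enlargement of the root) to a $\beta$-stable finitely generated $R$-submodule of $M_0$, and Noetherianity of $R$ combined with HSL-style stabilization forces the set of such subroots---hence the set of $F_R$-submodules of $M$---to be finite.
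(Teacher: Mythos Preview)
The paper does not prove this statement at all: Theorem~\ref{Hochster finiteness of Hom(M,N)} is quoted from Hochster's paper \cite{HochsterfinitenesspropertyofLyubeznikFmodule} (Theorem~5.1 and Corollary~5.2(b) there) as background, and no argument for it is given or sketched in the present paper. So there is no ``paper's own proof'' to compare your proposal against.

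That said, your outline has real gaps that would need to be closed before it could stand on its own. The reduction of $\Hom_{F_R}(M,N)$ to root-level data and to a Frobenius fixed-point condition on a finitely generated $R$-module is the right starting point and is essentially how Hochster proceeds. But the step ``by an HSL-type argument each $S_e$ is a finite-dimensional $\F_p$-vector space'' is not justified: the equation $F^*\phi_0\circ\beta = F^{*e}\gamma\circ\phi_0$ is not literally of the form ``$p$-linear endomorphism equals identity'' on a single module, and HSL concerns nilpotence of $p$-linear maps, not finiteness of fixed points. You need an actual argument here (for instance, reduction to an Artinian quotient or to a field, or an explicit count after choosing generators). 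More seriously, your stabilization step---``localize at a suitable maximal ideal in $\Supp(M)\cup\Supp(N)$ and apply faithfully flat completion''---does not work as stated: localization at a single maximal ideal can kill elements of $\Hom_{F_R}(M,N)$, so it does not give a uniform bound on $\dim_{\F_p} S_e$ over $R$. Hochster's argument is more careful here, working directly with the root and using that $\phi_0$ lands in a single fixed $N_0$ once one knows the $F_R$-module structure forces compatibility at every level. Finally, the ``moreover'' sketch is too vague: not every $F_R$-submodule arises from a $\beta$-stable submodule of a fixed root $M_0$ without further argument, and Noetherianity of $R$ alone does not give finiteness of submodules---one needs the finite-length property of $F_R$-finite modules over a local ring (Lyubeznik) together with the finiteness of $\Hom_{F_R}$ between simple subquotients.
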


The main purpose of this paper is to get some further results based on Hochster's results. In connection with Theorem \ref{Hochster existence of enough injectives}, we prove the following (this can be viewed as an analogue of the corresponding statement for $\scr{D}$-modules in characteristic $0$):
\begin{theorem}
\label{main theorem}
Let $R$ be an $F$-finite regular ring of characteristic $p>0$ such that there exists a canonical module $\omega_R$ with $F^{!}\omega_R\cong \omega_R$ (this holds if $R$ is essentially of finite type over an $F$-finite regular local ring). Then the category of $F_R$-modules has finite global dimension $d+1$ where $d=\dim R$.
\end{theorem}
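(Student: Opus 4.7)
The strategy combines two ingredients: (i) regularity of $R$, which gives $R$ global dimension $d$, and (ii) a two-term resolution coming from the Frobenius structure of any $F_R$-module. Heuristically, $F_R$-modules are the unit-modules over the Frobenius skew polynomial ring $R\{f\}$, which has global dimension $d+1$ when $R$ is regular of dimension $d$; the extra $+1$ reflects the ``Frobenius variable.'' Accordingly, I would bound the global dimension of the category by $d+1$ in two steps: (a) every $F_R$-module admits a two-term resolution by ``free''-type $F_R$-modules built from ordinary $R$-modules, and (b) $\Ext$ into such a free-type module is computed by ordinary $R$-module $\Ext$.

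For (a), I would exploit Lyubeznik's generating-morphism description of $F_R$-modules. Given a root $(M_0,\beta\colon M_0\to F^*M_0)$ of an $F_R$-module $M$, I would construct a short exact sequence of $F_R$-modules
\[
0\;\longrightarrow\;\sL(F^*M_0)\;\xrightarrow{\sL(\beta)-\iota}\;\sL(M_0)\;\longrightarrow\;M\;\longrightarrow\;0,
\]
where $\sL(\cdot)$ denotes the ``free'' $F_R$-module induced from an $R$-module -- i.e., the left adjoint to the forgetful functor $U\colon F_R\text{-Mod}\to R\text{-Mod}$, obtained as the unit completion of $R\{f\}\otimes_R(-)$ -- and $\iota$ is the canonical shift arising from the defining colimit of $\sL$. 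The exactness of this sequence is verified by unwinding the Frobenius pullback colimit defining $\sL$.

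For (b), flatness of Frobenius on the regular ring $R$ makes $\sL$ exact, and the adjunction $\Hom_{F_R}(\sL(N),M)\cong\Hom_R(N,UM)$ together with exactness of $U$ implies that $\sL$ sends $R$-projectives to $F_R$-projectives. Regularity of $R$ then delivers $\Ext^i_{F_R}(\sL(N),M)\cong\Ext^i_R(N,UM)=0$ for $i>d$. Combining this vanishing with the two-term resolution of (a), the long exact $\Ext$-sequence yields $\Ext^i_{F_R}(M,-)=0$ for $i>d+1$, proving the upper bound. For the matching lower bound -- that the global dimension equals $d+1$ and not something smaller -- I would exhibit explicit $F_R$-modules (with $R$ and a top local cohomology module such as $H^d_{\m}(R)$ as natural candidates) for which $\Ext^{d+1}_{F_R}$ is nonvanishing; the hypothesis $F^!\omega_R\cong\omega_R$ enters here to provide the canonical Frobenius structure on $\omega_R$ required by this computation. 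The principal technical obstacles are verifying the exactness of the two-term sequence -- especially the injectivity of $\sL(\beta)-\iota$, which is where the bookkeeping of the colimit is most delicate -- and producing a clean sharpness witness.
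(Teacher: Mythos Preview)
Your upper-bound argument is essentially correct and parallels the paper's approach. The paper works on the dual side: it passes via the equivalence $M\mapsto \omega_R\otimes M$ to unit right $R\{F\}$-modules, uses the two-step resolution $0\to M^{(1)}\otimes_R R\{F\}\to M\otimes_R R\{F\}\to M\to 0$ in the ambient category of all right $R\{F\}$-modules, and then shows that unit-injectives are injective in the larger category so that the $\Ext$ groups agree. Your left adjoint $\sL$ is the ``left'' version of the same idea (this is Emerton--Kisin's Lemma~1.8.1 for unit left $R\{F\}$-modules), and it has the pleasant feature that the $\omega_R$ hypothesis is not needed for the upper bound at all. One small slip: as written, $\sL(\beta)$ and $\iota$ point in opposite directions, so the map in your two-term sequence must be built from $\sL$ applied to an inverse (or adjoint) of the structure map minus the shift.

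The genuine gap is the lower bound. Your proposed witnesses $R$ and $H^d_\m(R)=E$ do \emph{not} work in general. For $R$: the paper computes (in Cartier-module language, after its Lemma~3.2) that
\[
\Ext_{R\{F\}}^{d+1}\bigl(\omega_R/(x_1,\dots,x_d)\omega_R,\ \omega_R\bigr)\ \cong\ R\big/\bigl((x_1,\dots,x_d)+\{r^p-r:r\in R\}\bigr),
\]
and this vanishes whenever $R$ is strict Henselian (e.g.\ complete local with algebraically closed residue field), so $\id_{F_R}R$ can be $\le d$. For $E$: your own adjunction gives $\Ext^i_{F_R}(\sL(N),E)\cong\Ext^i_R(N,E)=0$ for $i>0$ since $E$ is $R$-injective, and then the two-term resolution forces $\Ext^i_{F_R}(M,E)=0$ for every $M$ and every $i\ge 2$; thus $\id_{F_R}E\le 1$. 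The paper's actual sharpness witness is the \emph{non}--$F_R$-finite module $R^\infty=\bigoplus_{i\in\mathbb{Z}}Rz_i$ with the shift structure $\theta(z_i)=z_{i+1}$; one shows $\Ext^{d+1}$ into $R^\infty$ is nonzero by an explicit computation whose crux is that a certain Artin--Schreier--type system has no \emph{finitely supported} solution in $\bigoplus_{j\in\mathbb{Z}}R$. Without this (or an equivalent) construction your sharpness claim is unproved, and the role you assign to the hypothesis $F^!\omega_R\cong\omega_R$ is misplaced: in the paper it is used for the category equivalence transferring the computation, not to furnish the witness itself.
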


Theorem \ref{Hochster finiteness of Hom(M,N)} makes it quite natural to ask whether the higher $\Ext$ groups are also finite in this category (when $M$ and $N$ are $F_R$-finite $F_R$-modules). We show that in general this fails even for $\Ext^1$:

\begin{example}
Let $(R,\m, K)$ be a regular local ring of characteristic $p>0$ and dimension $d\geq 1$, and let $E=E(R/\m)$ be the injective hull of the residue field. Then $\Ext_{F_R}^1(R, E)\neq 0$. Moreover, when $K$ is infinite, $\Ext_{F_R}^1(R, E)$ is also infinite. In particular, $E$ is not injective in the category of $F_R$-modules.
\end{example}

This paper is organized as follows. In Section $2$ we review the definitions and basic properties of right $R\{F\}$-modules (i.e., Cartier modules) and Lyubeznik's $F_R$-modules, and we introduce the notion of {\it unit right $R\{F\}$-modules} which is motivated by the ideas in \cite{EmertonKisinRiemannHilbertcorrespondenceforunitFcrystals} and \cite{BlickleBoeckleCartierModulesFiniteness}. In Section $3$ we prove Theorem \ref{main theorem}, and we also obtain some results of independent interest on right $R\{F\}$-modules and unit right $R\{F\}$-modules. In Section $4$ we show some (non)finiteness results on $\Ext_{F_R}^1(M,N)$ when $M$ and $N$ are $F_R$-finite $F_R$-modules. Examples will be given throughout.

\section{Preliminaries}

Throughout this paper, $R$ will always denote a Noetherian regular ring of characteristic $p>0$ and dimension $d$. We use $R^{(e)}$ to denote the target ring of the $e$-th Frobenius map $F^e$: $R\rightarrow R$. When $M$ is an $R$-module and $x\in M$ is an element, we use $M^{(e)}$ to denote the corresponding module over $R^{(e)}$ and $x^{(e)}$ to denote the corresponding element in $M^{(e)}$. We shall let $F^e(-)$ denote the Peskine-Szpiro's Frobenius functor from $R$-modules to $R$-modules. In detail, $F^e(M)$ is given by base change to $R^{(e)}$ and then identifying $R^{(e)}$ with $R$. Note that by Kunz's result \cite{KunzCharacterizationsOfRegularLocalRings}, we know that $R^{(e)}$ is faithfully flat as an $R$-module. We say $R$ is {\it $F$-finite} if $R^{(1)}$ is finitely generated as an $R$-module. So for an $F$-finite regular ring, $R^{(1)}$ (and hence $R^{(e)}$ for every $e$) is finite and projective as an $R$-module.

We use $R\{F\}$ to denote the Frobenius skew polynomial ring, which is the noncommutative ring generated over $R$ by the symbols $1,F,F^2,\dots$ by requiring that $Fr=r^pF$ for $r\in R$. Note that $R\{F\}$ is always free as a left $R$-module and flat as a right $R$-module. When $R$ is $F$-finite, $R\{F\}$ is projective as a right $R$-module (because $R^{(1)}$ is projective in this case). We say an $R$-module $M$ is a {\it right $R\{F\}$-module} if it is a right module over the ring $R\{F\}$, or equivalently, there exists a morphism $\phi$: $M\rightarrow M$ such that for all $r\in R$ and $x\in M$, $\phi(r^px)=r\phi(x)$ (the right action of $F$ can be identified with $\phi$). This morphism can be also viewed as an $R$-linear map $\phi$: $M^{(1)}\rightarrow M$. We note that a right $R\{F\}$-module is the same as a {\it Cartier module} defined in \cite{BlickleBoeckleCartierModulesFiniteness} (where it is defined for general Noetherian rings and schemes of characteristic $p>0$).


We collect some definitions from \cite{LyubeznikFModulesApplicationsToLocalCohomology}. These are the main objects that we shall study in this paper. \begin{definition}[{\it cf.} Definition 1.1 in \cite{LyubeznikFModulesApplicationsToLocalCohomology}]
An {\it $F_R$-module} is an $R$-module $M$ equipped with an $R$-linear isomorphism $\theta$: $M\rightarrow F(M)$ which we call the structure morphism of $M$. A homomorphism of $F_R$-modules is an $R$-module homomorphism $f$: $M\rightarrow M'$ such that the following diagram commutes
\[\xymatrix{
M\ar[r]^{f}\ar[d]^{\theta} & M'\ar[d]^{\theta'}\\
F(M)\ar[r]^{F(f)}&F(M')\\
}\]
\end{definition}
\begin{definition}[{\it cf.} Definition 1.9  and Definition 2.1 in \cite{LyubeznikFModulesApplicationsToLocalCohomology}]
\label{generating morphism for F-mod}
A generating morphism of an $F_R$-module $M$ is an $R$-module homomorphism $\beta$: $M_0\rightarrow F(M_0)$, where $M_0$ is some $R$-module, such that $M$ is the limit of the inductive system in the top row of the commutative diagram
\[\xymatrix{
M_0\ar[r]^{\beta}\ar[d]^{\beta} & F(M_0)\ar[d]^{F(\beta)}\ar[r]^{F(\beta)}& F^2(M_0)\ar[r]^{F^2(\beta)}\ar[d]^{F^2(\beta)}&{\cdots}\\
F(M_0)\ar[r]^{F(\beta)}& F^2(M_0)\ar[r]^{F^2(\beta)}&F^3(M_0)\ar[r]^{F^3(\beta)}&{\cdots}\\
}\]
and $\theta$: $M\rightarrow F(M)$, the structure isomorphism of $M$, is induced by the vertical arrows in this diagram. An $F_R$-module $M$ is called {\it $F_R$-finite} if $M$ has a generating morphism $\beta$: $M_0\rightarrow F(M_0)$ with $M_0$ a finitely generated $R$-module.
\end{definition}

Now we introduce the notion of {\it unit right $R\{F\}$-modules} which is an analogue of {\it unit left $R\{F\}$-modules} in \cite{EmertonKisinRiemannHilbertcorrespondenceforunitFcrystals}. This is a key concept in relating Lyubeznik's $F_R$-modules with right $R\{F\}$-modules. The ideas can be also found in Section 5.2 in \cite{BlickleBoeckleCartierModulesFiniteness}. We first recall the functor $F^!(-)$ in the case that $R$ is regular and $F$-finite: for any $R$-module $M$, $F^!(M)$ is the $R$-module obtained by first considering $\Hom_R(R^{(1)}, M)$ as an $R^{(1)}$-module and then identifying $R^{(1)}$ with $R$. Remember that giving an $R$-module $M$ a right $R\{F\}$-module structure is equivalent to giving an $R$-linear map $M^{(1)}\rightarrow M$. But this is the same as giving an $R^{(1)}$-linear map $M^{(1)}\rightarrow\Hom_R(R^{(1)},M)$. Hence after identifying $R^{(1)}$ with $R$, we find that giving $M$ a right $R\{F\}$-module structure is equivalent to giving a map $\tau$: $M\rightarrow F^!M$. Moreover, it is straightforward to check that a homomorphism of right $R\{F\}$-modules is an $R$-module homomorphism $g$: $M\rightarrow M'$ such that the following diagram commutes
\[\xymatrix{
M\ar[r]^{g}\ar[d]^{\tau} & M'\ar[d]^{\tau'}\\
F^!M\ar[r]^{F^!(g)}&F^!M'\\
}\]

\begin{definition}
A unit right $R\{F\}$-module is a right $R\{F\}$-module $M$ such that the structure map $\tau$: $M\rightarrow F^!M$ is an isomorphism.
\end{definition}

\begin{remark}
\label{generating morphism for unit right R[F]-mod}
Similar to Definition \ref{generating morphism for F-mod}, we introduce the notion of generating morphism of unit right $R\{F\}$-modules. Let $M_0$ be a right $R\{F\}$-module with structure morphism $\tau_0$: $M_0\rightarrow F^!(M_0)$. Let $M$ be the limit of the inductive system in the top row of the commutative diagram
\[\xymatrix{
M_0\ar[r]^{\tau_0}\ar[d]^{\tau_0} & F^!(M_0)\ar[d]^{F^!(\tau_0)}\ar[r]^{F^!(\tau_0)}& (F^!)^2(M_0)\ar[r]^-{(F^!)^2(\tau_0)}\ar[d]^{(F^!)^2(\tau_0)}&{\cdots}\\
F^!(M_0)\ar[r]^{F^!(\tau_0)}& (F^!)^2(M_0)\ar[r]^{(F^!)^2(\tau_0)}&(F^!)^3(M_0)\ar[r]^-{(F^!)^3(\tau_0)}&{\cdots}\\
}\]
Since $R$ is $F$-finite, it is easy to see that $F^!(-)$ commutes with direct limit. Hence $\tau$: $M\rightarrow F^!M$ induced by the vertical arrows in the above diagram is an isomorphism. $M$ is a unit right $R\{F\}$-module.
\end{remark}

For an $F$-finite regular ring $R$, any rank $1$ projective module is a canonical module $\omega_R$ of $R$ (we refer to \cite{HartshorneResidues} for a detailed definition of canonical module and dualizing complex). When $R$ is local, $\omega_R=R$ is unique. It is easy to see that $F^{!}\omega_R$ is always a canonical module of $R$ (see \cite{HartshorneResidues} for more general results). However, to the best of our knowledge, it is still unknown whether there always exists $\omega_R$ such that $F^{!}\omega_R\cong\omega_R$ for $F$-finite regular ring $R$. Nonetheless, it is true if either $R$ is essentially of finite type over an $F$-finite regular local ring or $R$ is sufficiently affine. We refer to Proposition 2.20 and Proposition 2.21 in \cite{BlickleBoeckleCartierModulesFiniteness} as well as \cite{HartshorneResidues} for more details on this question.

The next theorem is well known. It follows from duality theory in \cite{HartshorneResidues}. In the context of the Frobenius morphism it is explained in \cite{BlickleBoeckleCartierModulesFiniteness}. Since we need to use this repeatedly throughout the article, we give a short proof for completeness.
\begin{theorem}
\label{equivalence of category of F-mod and unit right mod}
Let $R$ be an $F$-finite regular ring such that there exists a canonical module $\omega_R$ with $F^!\omega_R\cong\omega_R$. Then the category of unit right $R\{F\}$-modules is equivalent to the category of $F_R$-modules. Moreover, the equivalence is given by tensoring with $\omega_R^{-1}$, its inverse by tensoring with $\omega_R$.
\end{theorem}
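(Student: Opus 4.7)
The plan is to construct the equivalence via $\Phi(N) = N \otimes_R \omega_R$ on $F_R$-modules and $\Psi(M) = M \otimes_R \omega_R^{-1}$ on unit right $R\{F\}$-modules, using Grothendieck duality for the Frobenius to transfer structure maps. The key ingredient is a natural isomorphism of functors on $R$-modules
\[
F^!(M) \cong F(M) \otimes_R K,
\]
where $K := F^!(R)$ is an invertible $R$-module (the relative dualizing module). This follows from the projection formula $\Hom_R(R^{(1)}, M) \cong \Hom_R(R^{(1)}, R) \otimes_R M$ (valid since $R^{(1)}$ is finitely generated projective over $R$ by $F$-finiteness), combined with the natural rewriting $\Hom_R(R^{(1)}, R) \otimes_R M \cong \Hom_R(R^{(1)}, R) \otimes_{R^{(1)}} (R^{(1)} \otimes_R M)$; both sides acquire compatible $R^{(1)}$-module structures (identified with $R$) and the isomorphism respects them.

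Setting $M = \omega_R$ in the above formula and invoking the hypothesis $F^!(\omega_R) \cong \omega_R$ yields $\omega_R \cong F(\omega_R) \otimes_R K$. Substituting back gives the central identity
\[
F^!(N \otimes_R \omega_R) \cong F(N \otimes_R \omega_R) \otimes_R K \cong F(N) \otimes_R F(\omega_R) \otimes_R K \cong F(N) \otimes_R \omega_R,
\]
natural in $N$; notably, this calculation never requires computing $K$ or $F(\omega_R)$ explicitly.

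Given an $F_R$-module $N$ with structure $\theta\colon N \xrightarrow{\sim} F(N)$, tensoring with $\omega_R$ and composing with the central identity produces an isomorphism $\tau\colon \Phi(N) \xrightarrow{\theta \otimes \id} F(N) \otimes_R \omega_R \xrightarrow{\sim} F^!(\Phi(N))$, making $\Phi(N)$ a unit right $R\{F\}$-module. Symmetrically, a unit right $R\{F\}$-structure $\tau\colon M \xrightarrow{\sim} F^!(M)$ tensored with $\omega_R^{-1}$ and composed with the central identity in the form $F^!(M) \otimes_R \omega_R^{-1} \cong F(M \otimes_R \omega_R^{-1})$ yields an $F_R$-structure on $\Psi(M)$. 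Morphisms of the respective structured modules transfer under $\Phi$ and $\Psi$ by naturality, and since $\omega_R \otimes_R \omega_R^{-1} \cong R$ (compatibly with the structure maps, again by naturality), $\Phi \circ \Psi$ and $\Psi \circ \Phi$ are naturally isomorphic to the identity.

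The main obstacle will be rigorously verifying the key duality formula $F^!(M) \cong F(M) \otimes_R F^!(R)$ together with its naturality: one must carefully track the two different $R$-module structures on $\Hom_R(R^{(1)}, R)$ — the one from the target of $\Hom$ (used in the projection formula) and the one from the domain (which upon identifying $R^{(1)} \cong R$ produces the $F^!$-structure) — and verify that they match up correctly under the tensor manipulations. This is essentially Grothendieck duality for the finite flat Frobenius morphism, and everything else in the argument is formal bookkeeping.
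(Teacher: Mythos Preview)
Your proposal is correct and follows essentially the same route as the paper. Both arguments reduce to the projection formula for the finite flat Frobenius (valid since $R^{(1)}$ is finitely generated projective over $R$), combined with the hypothesis $F^!\omega_R\cong\omega_R$, to obtain a natural isomorphism intertwining $F^!$ and $F$ after twisting by $\omega_R^{\pm 1}$; the paper writes this as $\omega_R^{-1}\otimes_R F^!M\cong F(\omega_R^{-1}\otimes_R M)$, while you package the same calculation via the relative dualizing module $K=F^!(R)$ and the identity $F^!(-)\cong F(-)\otimes_R K$. The bookkeeping issue you flag---tracking the two $R$-structures on $\Hom_R(R^{(1)},R)$---is exactly the content of the paper's one displayed isomorphism before identifying $R^{(1)}$ with $R$.
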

\begin{proof}
We first note that, for any $R$-module $M$, \[(\omega_R^{-1})^{(1)}\otimes_{R^{(1)}}\Hom_R(R^{(1)}, M)\cong (\omega_R^{-1})^{(1)}\otimes_{R^{(1)}}\Hom_R(R^{(1)}, \omega_R)\otimes_R(\omega_R^{-1}\otimes_RM).\] Hence after identifying $R^{(1)}$ with $R$, the above equality becomes \[\omega_R^{-1}\otimes_RF^!M\cong \omega_R^{-1}\otimes_RF^!\omega_R\otimes_RF(\omega_R^{-1}\otimes_RM)\cong F(\omega_R^{-1}\otimes_RM)\] where the last equality is by our assumption $F^!\omega_R\cong\omega_R$. Now for any unit right $R\{F\}$-module $M$, we have an isomorphism $M\xrightarrow{\tau} F^!M$. Hence after tensoring with $\omega_R^{-1}$, we get $\omega^{-1}\otimes_RM\xrightarrow{id\otimes_R\tau} \omega^{-1}\otimes_RF^!M\cong F(\omega_R^{-1}\otimes_RM)$. This shows that $\omega_R^{-1}\otimes_RM$ is an $F_R$-module with structure morphism $\theta$ given by $id\otimes_R\tau$. The converse can be proved similarly.
\end{proof}

Throughout the rest of the paper, we will use $\Ext_R^i$, $\Ext_{R\{F\}}^i$, $\Ext_{uR\{F\}}^i$, and $\Ext_{F_R}^i$ (respectively, $\id_R$, $\id_{R\{F\}}$, $\id_{uR\{F\}}$, $\id_{F_R}$) to denote the $i$-th $\Ext$ group (respectively, the injective dimension) computed in the category of $R$-modules, right $R\{F\}$-modules, unit right $R\{F\}$-modules, and $F_R$-modules.

We end this section by studying some examples of $F_R$-modules. The simplest example of an $F_R$-module is $R$ equipped with structure isomorphism the identity map, that is, sending $1$ in $R$ to $1$ in $F(R)\cong R$. Note that this corresponds to the unit right $R\{F\}$-module $\omega_R\cong F^!\omega_R$ under Theorem \ref{equivalence of category of F-mod and unit right mod}. Another important example is $E=E(R/\m)$, the injective hull of $R/\m$ for a maximal ideal $\m$ of $R$. We can give it a generating morphism $\beta$: $R/\m\rightarrow F(R/\m)$ by sending $\overline{1}$ to $\overline{x_1^{p-1}\cdots x_d^{p-1}}$ (where $x_1,\dots,x_d$ represents minimal generators of $\m R_\m$). We will call these structure isomorphisms of $R$ and $E$ the {\it standard} $F_R$-module structures on $R$ and $E$. Note that in particular $R$ and $E$ with the standard $F_R$-module structures are $F_R$-finite $F_R$-modules. Now we provide a nontrivial example of an $F_R$-module:


\begin{example}
\label{example of F_R-module by shift}
Let $R^\infty:=\D\oplus_{i\in\mathbb{Z}}Rz_i$ denote the infinite direct sum of copies of $R$ equipped with the $F_R$-module structure by setting \[\theta: z_i\rightarrow z_{i+1}.\] Then $R^\infty$ is {\it not} $F_R$-finite. It is easy to see that we have a short exact sequence of $F_R$-modules: \[0\to R^\infty\xrightarrow{z_i\mapsto z_i-z_{i+1}}R^\infty\xrightarrow{z_i\mapsto 1}R\to 0\]
where the last $R$ is equipped with the standard $F_R$-module structure.

We want to point out that the above sequence does not split in the category of $F_R$-modules. Suppose $g$: $R\rightarrow R^\infty$ is a splitting, say $g(1)=\{y_j\}_{j\in\mathbb{Z}}\neq 0$. Then a direct computation shows that $\theta(\{y_j\})=\{y_j^p\}$, which is impossible by the definition of $\theta$. Hence, by Yoneda's characterization of $\Ext$ groups, we know that $\Ext_{F_R}^1(R,R^\infty)\neq 0$.
\end{example}

\section{The global dimension of Lyubeznik's $F$-modules}
Our goal in this section is to prove Theorem \ref{main theorem}. First we want to show that, when $R$ is $F$-finite, the category of right $R\{F\}$-modules has finite global dimension $d+1$. We start with a lemma which is an analogue of Lemma 1.8.1 in \cite{EmertonKisinRiemannHilbertcorrespondenceforunitFcrystals}.
\begin{lemma}
\label{two-step resolution of right R[F]-module}
Let $R$ be a regular ring and let $M$ be a right $R\{F\}$-module, so that there is an $R$-linear map $\phi$: $M^{(1)}\rightarrow M$ (so for every $i$, we get an $R$-linear map $\phi^i$: $M^{(i)}\rightarrow M$ by composing $\phi$ $i$ times). Then we have an exact sequence of right $R\{F\}$-modules
\begin{equation*}
0\rightarrow M^{(1)}\otimes_RR\{F\}\xrightarrow{\alpha} M\otimes_RR\{F\}\xrightarrow{\beta} M\rightarrow 0
\end{equation*}
where for every $x^{(1)}\in M^{(1)}$,
\[\alpha(x^{(1)}\otimes F^i)=\phi(x^{(1)})\otimes F^i-x\otimes F^{i+1}\]
and for every $y\in M$,
\[\beta(y\otimes F^i)=\phi^{i}(y^{(i)}).\]
\end{lemma}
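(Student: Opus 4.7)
The plan is to verify the sequence in three parts: (i) $\alpha$ and $\beta$ are well-defined right $R\{F\}$-module homomorphisms, (ii) $\beta$ is surjective and $\beta\circ\alpha = 0$, and (iii) $\alpha$ is injective with exactness at $M\otimes_R R\{F\}$. Throughout I would exploit the decomposition $R\{F\} = \bigoplus_{i\ge 0} RF^i$ of $R\{F\}$ as a left $R$-module, which induces left-$R$-module identifications $M\otimes_R R\{F\}\cong \bigoplus_{i\ge 0} M$ and $M^{(1)}\otimes_R R\{F\}\cong \bigoplus_{i\ge 0} M^{(1)}$, with the $i$-th summand corresponding to tensors with $F^i$.

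The map $\beta$ is the counit of the free-forgetful adjunction between right $R$-modules and right $R\{F\}$-modules, so its well-definedness and right $R\{F\}$-linearity are immediate; surjectivity follows from $\beta(y\otimes 1) = y$, and the formula $\beta(y\otimes F^i) = y\cdot F^i = \phi^i(y^{(i)})$ is forced by right $R\{F\}$-linearity. For $\beta\circ\alpha$, on a generator $x^{(1)}\otimes F^i$ I would compute $\beta(\phi(x^{(1)})\otimes F^i - x\otimes F^{i+1}) = \phi^i(\phi(x^{(1)})^{(i)}) - \phi^{i+1}(x^{(i+1)})$, and both terms express $x\cdot F^{i+1}$ by the associativity $(x\cdot F)\cdot F^i = x\cdot F^{i+1}$ of the $R\{F\}$-action on $M$, so they cancel.

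For injectivity of $\alpha$ and exactness in the middle I would argue by top-down induction on the $\bigoplus$-decomposition. On the $i$-th source summand, $\alpha$ has a component landing in the $i$-th target summand (given by $\phi$) and a component landing in the $(i+1)$-st target summand. Hence if $\xi = (\xi_0,\ldots,\xi_N,0,\ldots)\in \ker\alpha$ has top nonzero component $\xi_N$, the $(N+1)$-st coordinate of $\alpha(\xi)$ is determined purely by $\xi_N$; its vanishing, combined with injectivity of the canonical map $M\to M^{(1)}$ (which follows from the faithful flatness of Frobenius on the regular ring $R$), forces $\xi_N = 0$, and downward induction gives $\xi = 0$. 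Dually, for $\eta\in\ker\beta$ of top degree $N$ one exhibits a preimage by cancelling the top summand at each stage using the $(i+1)$-st-summand component of $\alpha$, reducing to strictly smaller top degree.

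The main obstacle is the well-definedness of $\alpha$ in (i). The formula is prescribed on generators $x^{(1)}\otimes 1$ (for $x\in M$) and extended by right $R\{F\}$-linearity, and one must check that this extension respects the $R$-bilinearity of the tensor: computing $\alpha((rx)^{(1)}\otimes F^i)$ by the direct formula (with $y = rx$) must agree with computing it via the rewriting $(rx)^{(1)}\otimes F^i = r^p x^{(1)}\otimes F^i = x^{(1)}\otimes r^p F^i$ followed by right $R\{F\}$-linearity. The compatibility is a delicate interaction between the commutation relation $Fr = r^p F$ in $R\{F\}$ and the semi-linear identity $(rx)^{(1)} = r^p x^{(1)}$ in $M^{(1)}$, and it is the one place where the noncommutative structure of $R\{F\}$ enters essentially; once this is pinned down the rest of the argument is formal bookkeeping.
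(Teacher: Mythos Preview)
Your proposal is correct and follows essentially the same approach as the paper: both use the left $R$-module decomposition $R\{F\}=\bigoplus_{i\ge 0}RF^i$ to get unique expressions $\sum x_i^{(1)}\otimes F^i$, both handle surjectivity of $\beta$ and $\beta\circ\alpha=0$ by direct computation, both prove injectivity of $\alpha$ by a top-degree argument, and both obtain $\ker\beta\subseteq\im\alpha$ by peeling off the top term (the paper simply writes down the closed-form solution $x_j=-\sum_{k>j}\phi^{k-j-1}(y_k^{(k-j-1)})$ that your induction produces).

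Two minor remarks. First, your appeal to faithful flatness for the injectivity of $M\to M^{(1)}$ is unnecessary: in the paper's conventions $M^{(1)}$ is just $M$ regarded as an $R^{(1)}$-module and $x\mapsto x^{(1)}$ is the identity on underlying sets, so this step is trivial (you may be thinking of $M\to F(M)=R^{(1)}\otimes_R M$, which is not what is used here). Second, what you flag as ``the main obstacle''---well-definedness of $\alpha$---is handled in the paper with no fuss by the unique-expression observation: since every element of $M^{(1)}\otimes_R R\{F\}$ is uniquely a finite sum $\sum x_i^{(1)}\otimes F^i$, the formula defines an additive map outright, and right $R\{F\}$-linearity is then a one-line check. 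Your route through generators and $R$-bilinearity works too, but is more laborious than necessary.
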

\begin{proof}
It is clear that every element in $M^{(1)}\otimes_RR\{F\}$ (resp. $M\otimes_RR\{F\}$) can be written uniquely as a finite sum $\sum x_i^{(1)}\otimes F^i$ where $x_i^{(1)}\in M^{(1)}$ (resp. $x_i\in M$) because $R\{F\}$ is free as a left $R$-module (this verifies that our maps $\alpha$ and $\beta$ are well-defined). It is straightforward to check that $\alpha$, $\beta$ are morphisms of right $R\{F\}$-modules and that $\beta\circ\alpha=0$ and $\beta$ is surjective (because $\beta(y\otimes 1)=\phi^0(y)=y$). So it suffices to show $\alpha$ is injective and $\ker(\beta)\subseteq \im (\alpha)$.

Suppose $\alpha(\sum x_i^{(1)}\otimes F^i)=0$. By definition of $\alpha$ we get $\sum(\phi(x_i^{(1)})-x_{i-1})\otimes F^i=0$. Hence by uniqueness we get $\phi(x_i^{(1)})=x_{i-1}$ for all $i$. Hence $x_i=0$ for all $i$ (because it is a finite sum). This proves $\alpha$ is injective.

Now suppose $\beta(\sum_{i=0}^n y_i\otimes F^i)=0$. We want to find $x_i$ such that
\begin{equation}
\label{solve equation to check exactness of two-step resolution}
\alpha(\sum_{i=0}^n x_i^{(1)}\otimes F^i)=\sum_{i=0}^n y_i\otimes F^i.
\end{equation}
By definition of $\beta$ we know that $\sum_{i=0}^n\phi^i(y_i^{(i)})=0$. Now one can check that
\begin{eqnarray*}
&&x_0=-(y_1+\phi(y_2^{(1)})+\cdots+\phi^{n-1}(y_n^{(n-1)}))\\
&&x_2=-(y_2+\phi(y_3^{(1)})+\cdots+\phi^{n-2}(y_n^{(n-2)}))\\
&&\cdots\\
&&x_{n-1}=-y_n\\
&&x_n=0
\end{eqnarray*}
is a solution of (\ref{solve equation to check exactness of two-step resolution}). This proves $\ker(\beta)\subseteq \im (\alpha)$.
\end{proof}

In \cite{EmertonKisinRiemannHilbertcorrespondenceforunitFcrystals}, a similar two-step resolution is proved for left $R\{F\}$-modules (see Lemma 1.8.1 in \cite{EmertonKisinRiemannHilbertcorrespondenceforunitFcrystals}). And using the two-step resolution it is proved in \cite{EmertonKisinRiemannHilbertcorrespondenceforunitFcrystals} that the category of left $R\{F\}$-modules has $\Tor$-dimension at most $d+1$ (see Corollary 1.8.4 in \cite{EmertonKisinRiemannHilbertcorrespondenceforunitFcrystals}). We want to mimic the strategy and prove the corresponding results for right $R\{F\}$-modules. And we can actually improve the result: we show that when $R$ is $F$-finite, the category of right $R\{F\}$-modules has finite {\it global} dimension {\it exactly} $d+1$.
\begin{theorem}
\label{right R[F]-module has finite global dimension}
Let $R$ be an $F$-finite regular ring of dimension $d$. Then the category of right $R\{F\}$-modules has finite global dimension $d+1$.
\end{theorem}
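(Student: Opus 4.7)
The plan is to prove both the upper and matching lower bounds on the global dimension. For the upper bound, I feed an arbitrary right $R\{F\}$-module $M$ into Lemma~\ref{two-step resolution of right R[F]-module}, giving
$$0\to M^{(1)}\otimes_R R\{F\}\xrightarrow{\alpha} M\otimes_R R\{F\}\xrightarrow{\beta} M\to 0.$$
Since $R$ is regular of dimension $d$, both $M$ and $M^{(1)}$ admit projective $R$-resolutions of length $\leq d$. The functor $(-)\otimes_R R\{F\}$ is exact (because $R\{F\}$ is free as a left $R$-module) and, being left adjoint to restriction of scalars along $R\hookrightarrow R\{F\}$, preserves projectives. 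Base-changing the chosen $R$-resolutions produces projective right $R\{F\}$-resolutions of $M\otimes_R R\{F\}$ and $M^{(1)}\otimes_R R\{F\}$ of length $\leq d$; splicing them along $\alpha$ yields a projective right $R\{F\}$-resolution of $M$ of length $\leq d+1$.

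For the lower bound I pick a maximal ideal $\m\subset R$ of height $d$ (exists since $\dim R=d$) and equip $k:=R/\m$ with the trivial right $R\{F\}$-module structure $\phi_k=0$. Applying $\Hom_{R\{F\}}(-,k)$ to the two-step resolution with $M=k$ (also with $\phi_M=0$) and using the derived adjunction $\Ext^i_{R\{F\}}(L\otimes_R R\{F\},N)\cong\Ext^i_R(L,N)$ (obtained by base-changing a projective $R$-resolution of $L$ to a projective $R\{F\}$-resolution) produces the long exact sequence
$$\cdots\to \Ext^i_R(k,k)\xrightarrow{\delta^i}\Ext^i_R(k^{(1)},k)\to \Ext^{i+1}_{R\{F\}}(k,k)\to \Ext^{i+1}_R(k,k)\to\cdots.$$
Since $R$ has global dimension $d$, the rightmost term vanishes at $i=d$, so $\Ext^{d+1}_{R\{F\}}(k,k)\cong\coker(\delta^d)$.

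I then show $\delta^d=0$ by decomposing $\alpha=\alpha_1-\alpha_2$ as $R\{F\}$-linear maps, with $\alpha_1(x^{(1)}\otimes s)=\phi_M(x^{(1)})\otimes s$ the base change of $\phi_M$ and $\alpha_2(x^{(1)}\otimes s)=x\otimes Fs$. Under $\Hom_{R\{F\}}(-,k)=\Hom_R(-,k)$, $\alpha_1^*$ on $\Ext^i_R$ is the functorial map $\Ext^i_R(\phi_M,k)$, which vanishes because $\phi_M=0$; and $\alpha_2^*$ factors as Frobenius twist $\Ext^i_R(k,k)\to \Ext^i_R(k^{(1)},k^{(1)})$ followed by $\Ext^i_R(k^{(1)},\phi_k)$, which vanishes because $\phi_k=0$. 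Hence $\delta^d=0$, so $\Ext^{d+1}_{R\{F\}}(k,k)\cong \Ext^d_R(k^{(1)},k)$. It remains to note that $k^{(1)}\cong k$ as $R$-modules (since $r\in\m$ forces $r^p\in\m$) and $\Ext^d_R(k,k)\cong \Ext^d_{R_\m}(k,k)\neq 0$ because $R_\m$ is regular local of dimension $d$; hence $\Ext^{d+1}_{R\{F\}}(k,k)\neq 0$.

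The main technical step is the chain-level lift of $\alpha_2$: unlike $\alpha_1$, it is not the base change of an $R$-linear map. Using projective $R$-resolutions $P_\bullet\to M$ and $P_\bullet^{(1)}\to M^{(1)}$ (the latter the Frobenius relabeling of the former, still projective by $F$-finiteness and regularity), one lifts $\alpha_2$ to $y^{(1)}\otimes s\mapsto y\otimes Fs$ from $P_\bullet^{(1)}\otimes_R R\{F\}$ to $P_\bullet\otimes_R R\{F\}$, checks it is a chain map, and verifies that after $\Hom_R(-,k)$ it recovers the Frobenius-twist-followed-by-$\phi_k$ composition claimed above. Once this is in place, $\phi_M=\phi_k=0$ immediately kills $\delta^d$ and the matching lower bound follows.
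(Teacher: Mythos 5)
Your proof is correct and follows essentially the same route as the paper: the two-step resolution of Lemma~\ref{two-step resolution of right R[F]-module} is spliced with length-$\leq d$ projective $R$-resolutions of $M$ and $M^{(1)}$, base-changed along the exact, projective-preserving functor $-\otimes_R R\{F\}$, to get the upper bound, and the lower bound comes from taking both modules with trivial Frobenius action, so that the induced map on $\Ext$ vanishes and $\Ext^{d+1}_{R\{F\}}$ is identified with a nonzero $\Ext^d_R$ (the paper uses $R/(x_1,\dots,x_d)$ and $R$ where you use $k=R/\m$, an immaterial difference). The only imprecision is the claim that $k^{(1)}\cong k$ as $R$-modules: in general $k^{(1)}$ is a direct sum of $[k:k^p]$ copies of $k$ (a finite, nonzero number by $F$-finiteness), which does not affect the nonvanishing of $\Ext^d_R(k^{(1)},k)$.
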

\begin{proof}
We first note that for every right $R\{F\}$-module $M$ with structure map $\tau$: $M\to F^!M$, a projective resolution of $M$ {\it in the category of $R$-modules} can be given a structure of right $R\{F\}$-modules such that it becomes an exact sequence of right $R\{F\}$-modules. This is because we can lift the natural map $\tau$: $M\rightarrow F^!M$ to a commutative diagram
\[\xymatrixcolsep{1pc}\xymatrix{
0 \ar[r]  & P_k \ar[r] \ar@{.>}[d] & P_{k-1} \ar[r]  \ar@{.>}[d] &{\cdots} \ar[r] & P_1 \ar[r]  \ar@{.>}[d] & P_0 \ar[r]\ar@{.>}[d] & M \ar[r] \ar[d]^\tau    & 0\\
0 \ar[r] & F^!(P_k)  \ar[r] & F^!(P_{k-1}) \ar[r] &{\cdots} \ar[r] & F^!(P_1) \ar[r] & F^!(P_0) \ar[r] & F^!(M) \ar[r]  &0\\
}\]
because we can always lift a map from a complex of projective modules to an acyclic complex ($F^!(-)$ is an exact functor when $R$ is $F$-finite).

By Lemma \ref{two-step resolution of right R[F]-module}, we have an exact sequence of right $R\{F\}$-modules
\begin{equation}
\label{an explicit two step resolution of M}
0\rightarrow M^{(1)}\otimes_RR\{F\}\xrightarrow{\alpha} M\otimes_RR\{F\}\xrightarrow{\beta} M\rightarrow 0.
\end{equation}

Now we tensor the above (\ref{an explicit two step resolution of M}) with the projective resolution of $M$ over $R$, we have the following commutative diagram
\begin{equation}
\label{huge commutative diagram}
\xymatrixcolsep{1pc}\xymatrix{
0 \ar[r]  & P_{k} \ar[r]  & P_{k-1} \ar[r]   &{\cdots} \ar[r] & P_{1} \ar[r]   & P_0  \ar[r]    & 0\\
0 \ar[r] & P_{k}\otimes_RR\{F\}  \ar[r] \ar[u]& P_{k-1}\otimes_RR\{F\} \ar[r] \ar[u] &{\cdots} \ar[r] & P_{1}\otimes_RR\{F\} \ar[r] \ar[u]& P_{0}\otimes_RR\{F\} \ar[r] \ar[u] &0\\
0 \ar[r] & P_{k}^{(1)}\otimes_RR\{F\}  \ar[r] \ar[u]^{\alpha_k} & P_{k-1}^{(1)}\otimes_RR\{F\} \ar[r] \ar[u]^{\alpha_{k-1}} &{\cdots} \ar[r] & P_{1}^{(1)}\otimes_RR\{F\} \ar[r] \ar[u]^{\alpha_1} & P_{0}^{(1)}\otimes_RR\{F\} \ar[r] \ar[u]^{\alpha_0} &0\\
}\end{equation}

The first line is a projective resolution of $M$ over $R$. And by the above discussion we can give each $P_i$ a right $R\{F\}$-module structure such that it is an exact sequence of right $R\{F\}$-modules. The second line (resp. the third line) is obtained from the first line by tensoring with $R\{F\}$ (resp. applying $^{(1)}$ and then tensoring with $R\{F\}$). Each column is the map described in Lemma \ref{two-step resolution of right R[F]-module}. In particular, all columns are exact sequences of right $R\{F\}$-modules.

Let $C_\bullet$ be the complex of the third line and $D_\bullet$ be the complex of the second line of (\ref{huge commutative diagram}). The homology of the mapping cone of $C_\bullet\rightarrow D_\bullet$ is the same as the homology of the quotient complex $D_\bullet /C_\bullet$, which is the first line in (\ref{huge commutative diagram}). Hence the mapping cone is acyclic. Since each $P_i$ is projective as an $R$-module, we know that $P_i$ is a direct summand of a free $R$-module $G$. So $P_i^{(1)}$ is a direct summand of $G^{(1)}$. Since $R$ is $F$-finite, $G^{(1)}$ is projective as an $R$-module, so $P_i^{(1)}$ is also projective as an $R$-module. Hence $P_i^{(1)}\otimes_RR\{F\}$ and $P_i\otimes_RR\{F\}$ are projective as right $R\{F\}$-modules for every $i$. Thus the mapping cone of $C_\bullet \to D_\bullet$ gives a right $R\{F\}$-projective resolution of $M$. We note that this resolution has length $k+1$. Since we can always take a projective resolution of $M$ of length $k\leq d$, the right $R\{F\}$-projective resolution we obtained has length $\leq d+1$.

We have already seen that the global dimension of right $R\{F\}$-modules is $\leq d+1$. Now we let $M$ and $N$ be two right $R\{F\}$-modules with trivial right $F$-action (i.e., the structure maps of $M$ and $N$ are the zero maps). I claim that in this case, we have
\begin{equation}
\label{relating Ext_R[F] with Ext_R}
\Ext^{j}_{R\{F\}}(M, N)=\Ext^{j}_R(M, N)\oplus\Ext_R^{j-1}(M^{(1)}, N).
\end{equation}
To see this, we look at (\ref{huge commutative diagram}) applied to $M$ with trivial right $F$-action. It is clear that in this case each $P_i$ in the first line of (\ref{huge commutative diagram}) also has trivial right $R\{F\}$-module structure. So as described in Lemma \ref{two-step resolution of right R[F]-module}, we have
\begin{equation}
\label{an explicit two step resolution of a trivial right R[F]-module}
\alpha_j(x^{(1)}\otimes F^i)=-x\otimes F^{i+1}
\end{equation}
for every $x^{(1)}\otimes F^i\in P_j^{(1)}\otimes_RR\{F\}$. The key observation is that, since $N$ has trivial right $F$-action, when we apply $\Hom_{R\{F\}}(-, N)$ to $\alpha_j$: $P_j^{(1)}\otimes_RR\{F\}\rightarrow P_j\otimes_RR\{F\}$, the dual map $\alpha_j^\vee$ is the zero map (one can check this by a direct computation using (\ref{an explicit two step resolution of a trivial right R[F]-module})). Hence when we apply $\Hom_{R\{F\}}(-, N)$ to the mapping cone of $C_\bullet\to D_\bullet$, the $j$-th cohomology is the same as the direct sum of the $j$-th cohomology of $\Hom_{R\{F\}}(C_\bullet[-1], N)$ and the $j$-th cohomology of $\Hom_{R\{F\}}(D_\bullet, N)$. That is,
\begin{equation}
\label{Ext^j in terms of cohomology of D and C}
\Ext^{j}_{R\{F\}}(M, N)=H^j(\Hom_{R\{F\}}(D_\bullet, N))\oplus H^j(\Hom_{R\{F\}}(C_\bullet[-1], N)).
\end{equation}
But for every right $R\{F\}$-module $N$, $\Hom_{R\{F\}}(-\otimes_RR\{F\}, N)\cong \Hom_R(-,N)$. So applying $\Hom_{R\{F\}}(-, N)$ to $D_\bullet$ and $C_\bullet$ are the same as applying $\Hom_R(-, N)$ to \[0\to P_{k}\to P_{k-1}\to\cdots\to P_0\to 0\] and \[0\to P_{k}^{(1)}\to P_{k-1}^{(1)}\to\cdots\to P_0^{(1)}\to 0,\] which are $R$-projective resolutions of $M$ and $M^{(1)}$ respectively. Hence we know that
\begin{equation}\label{cohomology of D and C in terms of Ext^j over R}
H^j(\Hom_{R\{F\}}(D_\bullet, N))\oplus H^j(\Hom_{R\{F\}}(C_\bullet[-1], N))=\Ext^{j}_R(M, N)\oplus\Ext_R^{j-1}(M^{(1)}, N).
\end{equation}
Now (\ref{relating Ext_R[F] with Ext_R}) follows from (\ref{Ext^j in terms of cohomology of D and C}) and (\ref{cohomology of D and C in terms of Ext^j over R}).

In particular, we can take two $R$-modules $M$ and $N$ such that $\Ext_R^d(M^{(1)}, N)\neq 0$ (for example, take $N=R$ and $M=R/(x_1,\dots,x_d)$ where $x_1,\dots, x_d$ is a regular sequence in $R$). Applying (\ref{relating Ext_R[F] with Ext_R}) to $j=d+1$ gives \[\Ext_{R\{F\}}^{d+1}(M, N)=\Ext_R^d(M^{(1)}, N)\neq 0.\] Hence the global dimension of right $R\{F\}$-modules is at least $d+1$. Since we have already shown it is bounded by $d+1$, this completes the proof that the global dimension of right $R\{F\}$-modules is exactly $d+1$.
\end{proof}

We can use the method in the proof of Theorem \ref{right R[F]-module has finite global dimension} to compute some $\Ext$ groups in the category of right $R\{F\}$-modules. Below we give an example which is a key ingredient when we show that the global dimension of $F_R$-module is $d+1$.
\begin{example}
\label{example on injective dimension of omega_R}
Let $R$ be an $F$-finite regular ring of dimension $d$ such that there exists a canonical module $\omega_R$ with $F^!\omega_R\cong\omega_R$. Let $\omega_R^\infty:=\oplus_{j\in \mathbb{Z}}\omega_Rz_j$ be an infinite direct sum of $\omega_R$. We give $\omega_R^\infty$ the right $R\{F\}$-module structure by setting $\tau$: $\omega_R^\infty\to F^!(\omega_R^\infty)\cong \omega_R^\infty$ such that \[\tau(yz_j)=yz_{j+1}\] for every $y\in\omega_R$. It is clear that $\omega_R^\infty$ is in fact a unit right $R\{F\}$-module, and it corresponds to the $F_R$-module $R^\infty$ described in Example \ref{example of F_R-module by shift} under Theorem \ref{equivalence of category of F-mod and unit right mod}.
\end{example}

\begin{lemma} \label{injective dimension of omega_R is d+1}
With the same notations as in Example \ref{example on injective dimension of omega_R}, we have
\begin{equation}
\id_{R\{F\}}\omega_R^\infty=d+1.
\end{equation}
\end{lemma}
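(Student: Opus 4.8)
The upper bound is immediate: by Theorem \ref{right R[F]-module has finite global dimension} the category of right $R\{F\}$-modules has global dimension $d+1$, so every module—in particular $\omega_R^\infty$—has injective dimension at most $d+1$. The whole content is the lower bound, i.e. producing a single right $R\{F\}$-module $N$ with $\Ext^{d+1}_{R\{F\}}(N,\omega_R^\infty)\neq 0$. Because I will test against a module of finite length, I would first localize at a maximal ideal $\m$ and thereby assume $R$ is local with $\omega_R=R$.

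The test module I would use is $N=R/\m$ equipped with a carefully chosen right $R\{F\}$-structure $\phi_N\colon N^{(1)}\to N$ (the trivial structure will not work—see below). The plan is to run exactly the machine from the proof of Theorem \ref{right R[F]-module has finite global dimension}: take a length-$d$ free resolution $P_\bullet\to N$ over $R$, form the mapping cone of the two-step resolutions of Lemma \ref{two-step resolution of right R[F]-module} to get a right $R\{F\}$-projective resolution of $N$ of length $d+1$, and apply $\Hom_{R\{F\}}(-,\omega_R^\infty)$. Using $\Hom_{R\{F\}}(-\otimes_R R\{F\},\omega_R^\infty)\cong\Hom_R(-,\omega_R^\infty)$, the resulting complex is the total complex of a double complex whose two rows compute $\Ext^\bullet_R(N,\omega_R^\infty)$ and $\Ext^\bullet_R(N^{(1)},\omega_R^\infty)$, and a routine chase identifies the top cohomology as
\[
\Ext^{d+1}_{R\{F\}}(N,\omega_R^\infty)=\coker\Big(\bar\alpha^\vee\colon \Ext^d_R(N,\omega_R^\infty)\longrightarrow \Ext^d_R(N^{(1)},\omega_R^\infty)\Big),
\]
which, when the coefficient module also carries the trivial action, reduces to the splitting (\ref{relating Ext_R[F] with Ext_R}).

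Writing $\omega_R^\infty=\bigoplus_{j}\omega_R z_j$ and using that $N$ and $N^{(1)}$ are finitely generated, both sides are $\bigoplus_j$ of $U:=\Ext^d_R(N,\omega_R)$ and $U':=\Ext^d_R(N^{(1)},\omega_R)$ respectively. Inspecting the two terms $\phi_{P}(x^{(1)})\otimes 1$ and $x\otimes F$ of $\alpha$ in Lemma \ref{two-step resolution of right R[F]-module} shows that $\bar\alpha^\vee$ is the difference of a ``diagonal'' term induced by pullback along $\phi_N$ and a ``shift'' term induced by the structure map $z_j\mapsto z_{j+1}$ of $\omega_R^\infty$ composed with the Frobenius on $\omega_R$. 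I would choose $\phi_N$—via local duality $\Ext^d_R(-,\omega_R)\cong(-)^\vee$, which makes $\phi_N\mapsto(\phi_N)^\vee$ a bijection—so that the diagonal map equals the Frobenius map $\psi\colon U\to U'$ appearing in the shift term. Then $\bar\alpha^\vee=\psi\circ(1-t)$, where $t$ denotes the shift; since the shift commutes with $\psi$, its cokernel is $U'[t,t^{-1}]/(1-t)(\im\psi)[t,t^{-1}]$, which is nonzero (for perfect residue field it is the ``$t=1$ fibre'' $U'\neq 0$, and for imperfect residue field $\im\psi\subsetneq U'$ makes it even larger). This mirrors, on the Cartier side, the non-split sequence $0\to\omega_R^\infty\xrightarrow{1-t}\omega_R^\infty\to\omega_R\to 0$ obtained from Example \ref{example of F_R-module by shift} through Theorem \ref{equivalence of category of F-mod and unit right mod}.

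The main obstacle is the choice of structure on $N$. With the trivial structure the diagonal term vanishes and $\bar\alpha^\vee$ is the shift composed with $\psi$; because the index set is all of $\mathbb{Z}$, this operator is surjective whenever $\psi$ is (which happens exactly when the residue field is perfect), so $\Ext^{d+1}$ would vanish and the naive test fails. The crux is therefore (i) recognizing that $N$ must carry a nontrivial Frobenius action, and (ii) matching that action to the Cartier structure of $\omega_R$ via local duality so that $\bar\alpha^\vee$ collapses to the manifestly non-surjective operator $1-t$. Verifying that the two Frobenius maps genuinely coincide—equivalently, computing the $\psi$ arising from $F^!\omega_R\cong\omega_R$ together with its Matlis dual—is the one genuinely technical point; everything else is the homological bookkeeping already set up in Theorem \ref{right R[F]-module has finite global dimension}.
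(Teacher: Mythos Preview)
Your strategy is exactly the paper's: test $\omega_R^\infty$ against a module of projective dimension $d$ equipped with a \emph{nontrivial} Cartier structure, build the length-$(d+1)$ right $R\{F\}$-projective resolution via the mapping-cone machine of Theorem~\ref{right R[F]-module has finite global dimension}, and show the top dual differential has nonzero cokernel. The paper makes the choices concrete without localizing: it takes $N=\omega_R/(x_1,\dots,x_d)\omega_R$ for a regular sequence with structure $\widetilde\phi=\phi\bigl((x_1\cdots x_d)^{p-1}\cdot-\bigr)$, lifts this to the Koszul complex, and verifies that the induced structure on $P_d=\omega_R$ is the canonical generator $\phi$ of $\Hom_R(\omega_R^{(1)},\omega_R)$---this is precisely your ``match via local duality'' step made explicit.

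There is, however, a genuine error in your shape of $\bar\alpha^\vee$. After identifying $\Hom_R(\omega_R,\omega_R^\infty)\cong\bigoplus_j R$ and $\Hom_R(\omega_R^{(1)},\omega_R^\infty)\cong\bigoplus_j R^{(1)}$, the dual of the $\phi_{P_d}(x^{(1)})\otimes 1$ term sends $\{s_j\}$ to $\{(s_j)^p\}$ (post-multiplication by $s_j$ becomes pre-multiplication by $s_j^p$ under the isomorphism $\Hom_R(R^{(1)},R)\cong R^{(1)}$), while the dual of the $x\otimes F$ term sends $\{s_j\}$ to $\{s_{j-1}\}$ with \emph{no} Frobenius twist on $s$. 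So modulo the Koszul tail the map is the Artin--Schreier operator $\{s_j\}\mapsto\{s_j^p-s_{j-1}\}$, i.e.\ $\psi-t$ and not $\psi\circ(1-t)$. Your cokernel computation for $\psi\circ(1-t)$ therefore does not apply; the correct non-surjectivity argument (the paper's) is that hitting the delta sequence at $j=0$ forces $\overline{s_j}=0$ for $j\ge 0$ and $\overline{s_j}=1$ for $j<0$, which cannot have finite support. This is the one place where your sketch, as written, would fail.
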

\begin{proof}
We first notice that the right $R\{F\}$-module structure on $\omega_R$ defined by $\omega_R\cong F^!\omega_R$ induces a canonical map $\phi$: $\omega_R^{(1)}\to \omega_R$, which is a generator of the free $R^{(1)}$-module $\Hom_R(\omega_R^{(1)}, \omega_R)\cong R^{(1)}$. That is, any map in $\Hom_R(\omega_R^{(1)}, \omega_R)$ can be expressed as $\phi(r^{(1)}\cdot-)$ for some $r^{(1)}\in R^{(1)}$ (we refer to \cite{BlickleBoeckleCartierModulesFiniteness} for more details on this).

Next we fix $x_1,\dots,x_d$ a regular sequence in $R$. We note that \[\widetilde{\phi}:=\phi((x_1^{(1)}\cdots x_d^{(1)})^{p-1}\cdot-)\in\Hom_R(\omega_R^{(1)}, \omega_R)\] satisfies $\widetilde{\phi}((x_1^{(1)},\dots, x_d^{(1)})\omega_R^{(1)})\subseteq (x_1,\dots,x_d)\omega_R$, so it induces a map \[(\omega_R/(x_1,\dots,x_d)\omega_R)^{(1)}\to \omega_R/(x_1,\dots,x_d)\omega_R.\] That is, $\widetilde{\phi}$ gives $\omega_R/(x_1,\dots,x_d)\omega_R$ a right $R\{F\}$-module structure. It is clear that we can lift this map $\widetilde{\phi}$ to the Koszul complex $K_\bullet(x_1,\dots,x_d; \omega_R)$ as follows:
\[\xymatrixcolsep{1pc}\xymatrix{
0 \ar[r]  & \omega_R^{(1)} \ar[r] \ar@{.>}[d]^{\phi} & {\cdots} \ar[r] & (\omega_R^d)^{(1)} \ar[r]  \ar@{.>}[d] & \omega_R^{(1)} \ar[r]\ar[d]^{\widetilde{\phi}} & (\omega_R/(x_1,\dots,x_d)\omega_R)^{(1)} \ar[r] \ar[d]^{\widetilde{\phi}}    & 0\\
0 \ar[r] & \omega_R \ar[r] & {\cdots} \ar[r] & \omega_R^d \ar[r] & \omega_R \ar[r] &  \omega_R/(x_1,\dots,x_d)\omega_R \ar[r]  &0\\
}\]
Chasing through the diagram, one can check that the induced map on the last spot of the above commutative diagram is exactly the map $\phi$ (the generator of $\Hom_R(\omega_R^{(1)}, \omega_R)$).

Now we apply (\ref{huge commutative diagram}) to $M=\omega_R/(x_1,\dots,x_d)\omega_R$ with structure map $\widetilde{\phi}$ and let the first line in (\ref{huge commutative diagram}) be the Koszul complex $K_\bullet(x_1,\dots,x_d; \omega_R)$. The above argument shows that the induced right $R\{F\}$-module structure on $P_d=\omega_R$ is given by the canonical map $\phi$: $\omega_R^{(1)}\to\omega_R$ (i.e., it corresponds to $\omega_R\cong F^!\omega_R$). As in Theorem \ref{right R[F]-module has finite global dimension}, the mapping cone of $C_\bullet\to D_\bullet$ is a right $R\{F\}$-projective resolution of $\omega_R/(x_1,\dots,x_d)\omega_R$ of length $d+1$, and the tail of this resolution is
\begin{equation}
\label{tail of resolution of R/(x_1,...,x_d)}
0\rightarrow \omega_R^{(1)}\otimes_RR\{F\}\xrightarrow{h} \omega_R\otimes_RR\{F\}\oplus (\omega_R^d)^{(1)}\otimes_RR\{F\}\rightarrow \cdots
\end{equation}
where we have
\begin{equation}
\label{map on the tail of mapping cone}
h(y^{(1)}\otimes F^i)=(-1)^d (y\otimes F^{i+1}-\phi(y^{(1)})\otimes F^i)\oplus (x_1^{(1)}y^{(1)},\dots,x_d^{(1)}y^{(1)})\otimes F^i
\end{equation}
for every $y\in \omega_R$. Now we apply $\Hom_{R\{F\}}(-, \omega_R^\infty)$ to (\ref{tail of resolution of R/(x_1,...,x_d)}) and identify $\Hom_R(-, \omega_R^\infty)=\Hom_{R\{F\}}(-\otimes_RR\{F\}, \omega_R^\infty)$, we get
\begin{equation}
\label{dual of the tail of resolution}
0\leftarrow \Hom_R(\omega_R^{(1)}, \omega_R^\infty)\xleftarrow{h^\vee} \Hom_R(\omega_R, \omega_R^\infty) \oplus \Hom_R(\omega_R^{(1)}, \omega_R^\infty)^d \leftarrow\cdots.
\end{equation}
Since $\Hom_R(\omega_R^{(1)},\omega_R)\cong R^{(1)}$ and $\Hom_R(\omega_R, \omega_R)=R$, we can rewrite (\ref{dual of the tail of resolution}) as
\[0\leftarrow \oplus_{j\in \mathbb{Z}}R^{(1)} \xleftarrow{h^\vee} (\oplus_{j\in \mathbb{Z}}R) \oplus (\oplus_{j\in \mathbb{Z}}R^{(1)})^d \leftarrow\cdots.\]
And after a careful computation using (\ref{map on the tail of mapping cone}) and the right $R\{F\}$-module structure of $\omega_R^\infty$, we have
\begin{equation}
\label{image of the dual of h}
h^\vee(\{s_j\}\oplus (\{t_{1j}^{(1)}\},\dots,\{t_{dj}^{(1)}\}))=\{(-1)^d((s_j^{(1)})^p-s_{j-1}^{(1)})+\sum_{i=1}^d x_i^{(1)}t_{ij}^{(1)}\}_{j\in \mathbb{Z}}
\end{equation}
where $\{s_j\}$ denotes an element in $\oplus_{j\in \mathbb{Z}}R$ and $(\{t_{1j}^{(1)}\},\dots,\{t_{dj}^{(1)}\})$ denotes an element in $(\oplus_{j\in \mathbb{Z}}R^{(1)})^d$. The key point here is that $h^\vee$ is {\it not} surjective. To be more precise, I claim $(-1)^{d-1}z_0=(\dots,0,(-1)^{d-1},0,0,\dots)$ (i.e., the element in $\oplus_{j\in \mathbb{Z}}R^{(1)}$ with $0$-th entry $(-1)^{d-1}$ and other entries $0$) is not in the image of $h^\vee$. This is because $\sum_{i=1}^d x_i^{(1)}t_{ij}^{(1)}$ can only take values in $\oplus_{j\in\mathbb{Z}}(x_1^{(1)},\dots,x_d^{(1)})$, so if $z_0\in\im h^\vee$, then mod $\oplus_{j\in\mathbb{Z}}(x_1^{(1)},\dots,x_d^{(1)})$, we know by (\ref{image of the dual of h}) that $(\overline{s_j}^{(1)})^p-\overline{s_{j-1}}^{(1)}=0$ for $j\neq 0$ and $(\overline{s_0}^{(1)})^p-\overline{s_{-1}}^{(1)}=-1$. And it is straightforward to see that a solution $\{s_j\}_{j\in\mathbb{Z}}$ to this system must satisfy $\overline{s_j}=0$ when $j\geq 0$ and $\overline{s_j}=\overline{1}$ when $j<0$ where $\overline{s}$ denotes the image of $s\in R$ mod $(x_1,\dots,x_d)$. So there is no solution in $\oplus_{j\in \mathbb{Z}}R$, since $\overline{s_j}=\overline{1}$ for every $j<0$ implies there has to be infinitely many nonzero $s_j$.

Hence we get
\begin{equation}
\label{ext^d+1 of omega_R}
\Ext_{R\{F\}}^{d+1}(\omega_R/(x_1,\dots,x_d)\omega_R, \omega_R^\infty)\cong \coker {h^\vee} \neq 0.
\end{equation}
Combining (\ref{ext^d+1 of omega_R}) with Theorem \ref{right R[F]-module has finite global dimension} completes the proof of the Lemma.
\end{proof}

\begin{remark}
One might hope that $\id_{R\{F\}}\omega_R=d+1$ by the same type computation used in Lemma \ref{injective dimension of omega_R is d+1}. But there is a small gap when doing this. The problem is, when we apply $\Hom_{R\{F\}}(-, \omega_R)$ to (\ref{tail of resolution of R/(x_1,...,x_d)}) and compute $\coker h^\vee$, we get
\begin{equation}
\label{ext of omega_R in the remark}
\Ext_{R\{F\}}^{d+1}(\omega_R/(x_1,\dots,x_d)\omega_R, \omega_R)=\coker{h^\vee}\cong\D\frac{R}{(x_1,\dots,x_d)+\{r^p-r\}_{r\in R}}.
\end{equation}
So if the set $\{r^p-r\}_{r\in R}$ can take all values of $R$ (this happens, for example when $(R,\m)$ is a complete regular local ring with algebraically closed residue field, see Remark \ref{finite conditions on Ext^1(R, R)}), then $\Ext_{R\{F\}}^{d+1}(\omega_R/(x_1,\dots,x_d)\omega_R, \omega_R)=0$. So we cannot get the desired result in this way. However, we do get from (\ref{ext of omega_R in the remark}) that if $(R,\m, K)$ is an $F$-finite regular local ring with $K\cong R/\m$ a {\it finite} field, then $\id_{R\{F\}}R=d+1$.
\end{remark}

Now we prove our main result. We start by proving that the $\Ext$ groups are the same no matter one compute in the category of unit right $R\{F\}$-modules or the category of right $R\{F\}$-modules. We give two proofs of this result, the second proof is suggested by the referee, which in fact shows a stronger result.
\begin{theorem}
\label{ext groups are the same}
Let $R$ be an $F$-finite regular ring of dimension $d$ such that there exists a canonical module $\omega_R$ with $F^!\omega_R\cong\omega_R$. Let $M$, $N$ be two unit right $R\{F\}$-modules. Then we have $\Ext_{uR\{F\}}^i(M,N)\cong\Ext_{R\{F\}}^i(M,N)$ for every $i$. In particular, the category of unit right $R\{F\}$-modules and the category of $F_R$-modules has finite global dimension $\leq d+1$.
\end{theorem}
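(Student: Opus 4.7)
The plan is to exhibit the inclusion $\iota\colon uR\{F\}\text{-mod}\hookrightarrow R\{F\}\text{-mod}$ as a fully faithful functor admitting an exact left adjoint. Doing so will force $\iota$ to carry injectives to injectives, and thereby reduce both $\Ext$ computations to a common injective resolution.

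First, I would check that the unit modules form an abelian subcategory closed under extensions inside $R\{F\}\text{-mod}$. Since $R$ is $F$-finite, $R^{(1)}$ is finitely generated projective over $R$, so $F^!(-)=\Hom_R(R^{(1)},-)$ is exact and commutes with filtered direct limits. Given a short exact sequence $0\to N'\to X\to N\to 0$ of right $R\{F\}$-modules with $N'$ and $N$ unit, applying $F^!$ and invoking the five lemma shows $\tau_X\colon X\to F^! X$ is an isomorphism, so $X$ is unit; kernels and cokernels of unit maps are handled identically.

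Next, I would define a unitization functor $\mathrm{Un}\colon R\{F\}\text{-mod}\to uR\{F\}\text{-mod}$ by
\[
\mathrm{Un}(L):=\varinjlim\bigl(L\xrightarrow{\tau_L}F^!L\xrightarrow{F^!\tau_L}(F^!)^2 L\to\cdots\bigr),
\]
as in Remark \ref{generating morphism for unit right R[F]-mod}, whose output is unit because $F^!$ commutes with filtered colimits. Since $F^!$ is exact and filtered colimits of modules are exact, $\mathrm{Un}$ is exact. It remains to check the adjointness $\mathrm{Un}\dashv\iota$: a map $f\colon L\to\iota M$ with $M$ unit satisfies $\tau_M^{-1}\circ F^!f\circ\tau_L=f$, producing a compatible cocone on the direct system defining $\mathrm{Un}(L)$ and hence a unique extension $\mathrm{Un}(L)\to M$; restriction along the canonical map $L\to\mathrm{Un}(L)$ inverts this correspondence.

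Having an exact left adjoint, $\iota$ preserves injectives. By Hochster's Theorem \ref{Hochster existence of enough injectives} together with the equivalence of Theorem \ref{equivalence of category of F-mod and unit right mod}, the category $uR\{F\}\text{-mod}$ has enough injectives, so for any unit $N$ an injective resolution $N\to I^{\bullet}$ in $uR\{F\}\text{-mod}$ is simultaneously an injective resolution in $R\{F\}\text{-mod}$. Applying $\Hom_{R\{F\}}(M,-)=\Hom_{uR\{F\}}(M,-)$ (equal by full faithfulness) immediately yields $\Ext^i_{uR\{F\}}(M,N)\cong\Ext^i_{R\{F\}}(M,N)$ for every $i$. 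The bound $d+1$ on the global dimension of $F_R\text{-mod}$ then follows by combining this with Theorem \ref{right R[F]-module has finite global dimension} via the equivalence of Theorem \ref{equivalence of category of F-mod and unit right mod}. I expect the main technical step to be verifying the exactness and adjointness of $\mathrm{Un}$; both ultimately rest on $F$-finiteness of $R$, which ensures that $F^!$ is exact and commutes with filtered colimits.
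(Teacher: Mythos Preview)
Your argument is correct and is essentially the paper's second proof, recast in adjunction language: the paper shows directly that an injective $I$ in $uR\{F\}$-mod remains injective in $R\{F\}$-mod by applying your functor $\mathrm{Un}$ to an arbitrary embedding $I\hookrightarrow W$ and splitting the resulting unit embedding, whereas you deduce the same preservation of injectives from the abstract fact that $\iota$ has an exact left adjoint $\mathrm{Un}$. Both rest on the identical construction $\mathrm{Un}(L)=\varinjlim (F^!)^e L$ and the exactness of $F^!$ under $F$-finiteness; your packaging is cleaner, the paper's is more hands-on, but there is no real mathematical daylight between them.

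For completeness, the paper also gives an independent first proof via Yoneda's description of $\Ext^i$: it builds explicit mutually inverse maps between equivalence classes of $i$-extensions in the two categories, again using the unitization $\varinjlim (F^!)^e X_j$ to turn an arbitrary right $R\{F\}$-extension into a unit one.
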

\begin{proof}
First we note that by Theorem \ref{right R[F]-module has finite global dimension} and Theorem \ref{equivalence of category of F-mod and unit right mod}, it is clear that we only need to show $\Ext_{uR\{F\}}^i(M,N)\cong\Ext_{R\{F\}}^i(M,N)$ for $M$, $N$ two unit right $R\{F\}$-modules. Below we give two proofs of this fact.

\vspace{1em}

{\parindent 0pt{\it First proof}}: We use Yoneda's characterization of $\Ext^i$ ({\it cf.} Chapter 3.4 in \cite{WeibelHomological}). Note that this is the same as the derived functor $\Ext^i$ whenever the abelian category has enough injectives or enough projectives, hence holds for both the category of unit right $R\{F\}$-modules and the category of right $R\{F\}$-modules (unit right $R\{F\}$-modules has enough injectives by Theorem \ref{Hochster existence of enough injectives} and Theorem \ref{equivalence of category of F-mod and unit right mod}). An element in $\Ext_{uR\{F\}}^i(M,N)$ (resp. $\Ext_{R\{F\}}^i(M,N)$) is an equivalence class of exact sequences of the form \[\xi: 0\rightarrow N\rightarrow X_1\rightarrow \cdots\rightarrow X_i\rightarrow M\rightarrow 0\] where each $X_i$ is a unit right $R\{F\}$-module (resp. right $R\{F\}$-module) and the maps are maps of unit right $R\{F\}$-modules (resp. maps of right $R\{F\}$-modules). The equivalence relation is generated by the relation $\xi_X\thicksim\xi_Y$ if there is a commutative diagram
\[ \xymatrix{
0 \ar[r]  & N \ar[r] \ar[d]^{\cong} & X_1\ar[r] \ar[d]  &{\cdots} \ar[r] &X_i \ar[r] \ar[d]  &M \ar[r] \ar[d]^{\cong}   &0\\
0 \ar[r] &N  \ar[r] & Y_1 \ar[r]  &{\cdots} \ar[r] & Y_i  \ar[r] & M \ar[r] &0
}\]

From this characterization of $\Ext^i$ it is clear that we have a well-defined map \[\iota: \Ext_{uR\{F\}}^i(M,N)\rightarrow\Ext_{R\{F\}}^i(M,N)\] taking an equivalence class of an exact sequence of unit right $R\{F\}$-modules to the same exact sequence but viewed as an exact sequence in the category of right $R\{F\}$-modules.

Conversely, if we have an element in $\Ext_{R\{F\}}^i(M,N)$, say $\xi$, we have an exact sequence of right $R\{F\}$-modules, this induces a commutative diagram
\[ \xymatrix{
0 \ar[r]  & N \ar[r] \ar[d]^{\cong} & X_1\ar[r] \ar[d]  &{\cdots} \ar[r] &X_i \ar[r] \ar[d]  &M \ar[r] \ar[d]^{\cong}   &0\\
0 \ar[r] &F^!(N)  \ar[r] \ar[d]^{\cong}& F^!(X_1) \ar[r] \ar[d] &{\cdots} \ar[r] & F^!(X_i)  \ar[r] \ar[d] & F^!(M) \ar[r] \ar[d]^{\cong} &0\\
0 \ar[r] &(F^!)^2(N)  \ar[r] \ar[d]^{\cong} & (F^!)^2(X_1) \ar[r] \ar[d] &{\cdots} \ar[r] & (F^!)^2(X_i)  \ar[r] \ar[d] & (F^!)^2(M) \ar[r]\ar[d]^{\cong} &0\\
 { } &{ }&{} &{}&{}&{}
}\]
Taking direct limits for columns and noticing that $M$, $N$ are unit right $R\{F\}$-modules, we get a commutative diagram
\begin{equation}
\label{xi is equivalent to xi'}
\xymatrix{
0 \ar[r]  & N \ar[r] \ar[d]^{\cong} & X_1\ar[r] \ar[d]  &{\cdots} \ar[r] &X_i \ar[r] \ar[d]  &M \ar[r] \ar[d]^{\cong}   &0\\
0 \ar[r] &N  \ar[r] & \varinjlim{(F^!)^e(X_1)} \ar[r]  &{\cdots} \ar[r] & \varinjlim{(F^!)^e(X_i)}  \ar[r] & M \ar[r] &0
}
\end{equation}
Since the functor $F^!(-)$ and the direct limit functor are both exact, the bottom sequence is still exact and hence it represents an element in $\Ext_{uR\{F\}}^i(M,N)$ (note that each $\varinjlim{(F^!)^e(X_j)}$ is a unit right $R\{F\}$-module by Remark \ref{generating morphism for unit right R[F]-mod}). We call this element $\xi'$. Then we have a map \[\eta: \Ext_{R\{F\}}^i(M,N)\xrightarrow{\xi\mapsto\xi'} \Ext_{uR\{F\}}^i(M,N).\] This map is well-defined because it is easy to check that if $\xi_1\sim\xi_2$, then we also have $\xi_1'\sim\xi_2'$. It is also straightforward to check that $\iota$ and $\eta$ are inverses of each other. Obviously $\eta\circ\iota([\xi])=[\xi]$ and $\iota\circ\eta([\xi])=[\xi']=[\xi]$, where the last equality is by (\ref{xi is equivalent to xi'}) (which shows that $\xi\sim\xi'$, and hence they represent the same equivalence class in $\Ext_{R\{F\}}^i(M,N)$).

\vspace{1em}
{\parindent 0pt{\it Second proof}}: By Theorem \ref{Hochster existence of enough injectives} and Theorem \ref{equivalence of category of F-mod and unit right mod} we know that the category of unit right $R\{F\}$-module has enough injectives. Now we show that every injective object in the category of unit right $R\{F\}$ modules is in fact injective in the category of right $R\{F\}$-modules. To see this, let $I$ be a unit right $R\{F\}$-injective module. It is enough to show that whenever we have $0\rightarrow I\rightarrow W$ for some right $R\{F\}$-module $W$, the sequence splits. But $0\rightarrow I\rightarrow W$ induces the following diagram:
\[ \xymatrix{
0 \ar[r]  & I \ar[r] \ar[d]^{\cong} & W \ar[d] \\
0 \ar[r] &F^!(I)  \ar[r] \ar[d]^{\cong}& F^!(W) \ar[d] \\
0 \ar[r] &(F^!)^2(I)  \ar[r] \ar[d]^{\cong} & (F^!)^2(W)  \ar[d]\\
 { } &{ }&{}
}\]
Taking direct limit for the columns we get
\begin{equation}
\label{commutative diagram when proving injective objects}
\xymatrix{
0 \ar[r]  & I\ar[r] \ar[d]^{\cong} & W \ar[d]  \\
0 \ar[r] &I  \ar[r] & \varinjlim{(F^!)^e(W)}
}
\end{equation}

We still have exactness because the functor $F^{!}(-)$ and the direct limit functor are both exact. We also note that $\varinjlim{(F^!)^e(W)}$ is a unit right $R\{F\}$-module by Remark \ref{generating morphism for unit right R[F]-mod}. Now since $I$ is injective in the category of unit right $R\{F\}$-modules, we know that the bottom map $0\to I\rightarrow \varinjlim{(F^!)^e(W)}$ splits as unit right $R\{F\}$-modules, so it also splits as right $R\{F\}$-modules. But now composing with the commutative diagram (\ref{commutative diagram when proving injective objects}) shows that the map $0\to I\rightarrow W$ splits as right $R\{F\}$-modules.

Now it is clear that $\Ext_{uR\{F\}}^i(M,N)\cong\Ext_{R\{F\}}^i(M,N)$. Because one can take an injective resolution of $N$ in the category of unit right $R\{F\}$-modules:
\begin{equation}
\label{injective resolution of a unit right module}
0\rightarrow N \rightarrow I_0\rightarrow I_1\rightarrow\cdots\cdots.
\end{equation}
By the above argument this can be also viewed as an injective resolution in the category of right $R\{F\}$-modules. Since applying $\Hom_{R\{F\}}(M, -)$ and $\Hom_{uR\{F\}}(M, -)$ to (\ref{injective resolution of a unit right module}) are obviously the same, we know that $\Ext_{uR\{F\}}^i(M,N)\cong\Ext_{R\{F\}}^i(M,N)$.
\end{proof}

\begin{theorem}
\label{the category of F-mod has finite global dimension d+1}
Let $R$ be an $F$-finite regular ring of dimension $d$ such that there exists a canonical module $\omega_R$ with $F^!\omega_R\cong\omega_R$. Then the category of unit right $R\{F\}$-modules and the category of Lyubeznik's $F_R$-modules both have finite global dimension $d+1$.
\end{theorem}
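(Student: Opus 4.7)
The plan is to assemble the theorem from three ingredients already in place: the upper bound from Theorem \ref{ext groups are the same}, the injective-dimension computation $\id_{R\{F\}}\omega_R^\infty = d+1$ from Lemma \ref{injective dimension of omega_R is d+1}, and a key observation implicit in the second proof of Theorem \ref{ext groups are the same}, namely that an injective object in the category of unit right $R\{F\}$-modules is automatically injective in the larger category of right $R\{F\}$-modules.

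First, Theorem \ref{ext groups are the same} immediately gives the upper bound: the global dimension of unit right $R\{F\}$-modules is at most $d+1$, so only the matching lower bound needs new work. For this I would focus on the single unit right $R\{F\}$-module $\omega_R^\infty$ from Example \ref{example on injective dimension of omega_R} and show $\id_{uR\{F\}}\omega_R^\infty \geq d+1$. Suppose, for contradiction, that $\omega_R^\infty$ admits an injective resolution $0 \to \omega_R^\infty \to I_0 \to I_1 \to \cdots \to I_k \to 0$ of length $k<d+1$ in the unit category. The direct-limit argument in the second proof of Theorem \ref{ext groups are the same} — forming $\varinjlim (F^!)^e$ of any embedding of $I_j$ into a right $R\{F\}$-module — applies verbatim to show that each $I_j$ is also injective in the ambient category of right $R\{F\}$-modules. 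Hence the same complex is an injective resolution of $\omega_R^\infty$ in the category of right $R\{F\}$-modules, and $\id_{R\{F\}}\omega_R^\infty \leq k < d+1$, contradicting Lemma \ref{injective dimension of omega_R is d+1}.

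Combining the two bounds gives $\id_{uR\{F\}}\omega_R^\infty = d+1$, so the global dimension of unit right $R\{F\}$-modules is exactly $d+1$. The corresponding statement for $F_R$-modules then follows formally from the equivalence of categories in Theorem \ref{equivalence of category of F-mod and unit right mod}, since tensoring with $\omega_R^{\pm 1}$ is exact and transports injective resolutions between the two categories. The only real obstacle is conceptual rather than computational: one must notice that the unitization-by-direct-limit trick already used to prove Theorem \ref{ext groups are the same} in fact transfers injectivity from the unit category up to the larger category, providing exactly the mechanism needed to upgrade Lemma \ref{injective dimension of omega_R is d+1} from a lower bound in the right $R\{F\}$-module setting to a lower bound in the unit setting. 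Everything else is formal bookkeeping.
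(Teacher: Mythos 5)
Your proposal is correct and follows essentially the same route as the paper: upper bound from Theorem \ref{ext groups are the same}, lower bound by observing that injectives in the unit category remain injective among all right $R\{F\}$-modules (via the $\varinjlim (F^!)^e$ argument), so a short unit-injective resolution of $\omega_R^\infty$ would contradict Lemma \ref{injective dimension of omega_R is d+1}. The transfer to $F_R$-modules via Theorem \ref{equivalence of category of F-mod and unit right mod} is likewise exactly what the paper does.
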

\begin{proof}
By Theorem \ref{equivalence of category of F-mod and unit right mod}, it suffices to show that the category of unit right $R\{F\}$-modules has finite global dimension $d+1$. By Theorem \ref{ext groups are the same}, we know that the global dimension is at most $d+1$.

Now let $\omega_R^\infty$ be the unit right $R\{F\}$-module described in Example \ref{example on injective dimension of omega_R}. If the global dimension is $\leq d$, then we know that $\omega_R^\infty$ has a unit right $R\{F\}$-injective resolution of length $d'\leq d$:
\begin{equation}
\label{injective resolution of length d}
0\rightarrow \omega_R^\infty\rightarrow I_0\rightarrow I_1\rightarrow\cdots\rightarrow I_{d'}\rightarrow 0
\end{equation}
But by the argument in the second proof of Theorem \ref{ext groups are the same}, we know that each $I_j$ is injective in the category of right $R\{F\}$-modules. So (\ref{injective resolution of length d}) can be viewed as an injective resolution of $\omega_R^\infty$ in the category of right $R\{F\}$-modules. And hence $\id_{R\{F\}}\omega_R^\infty\leq d$, which contradicts Lemma \ref{injective dimension of omega_R is d+1}.
\end{proof}

\begin{remark}
It is clear from Theorem \ref{equivalence of category of F-mod and unit right mod} and the above proof of Theorem \ref{the category of F-mod has finite global dimension d+1} that \[\id_{F_R}R^\infty=\id_{uR\{F\}}\omega_R^\infty=d+1.\]
\end{remark}

\section{non-Finiteness of $\Ext_{F_R}^1$}
In this section we study the group  $\Ext_{F_R}^1(M,N)$ when $M$, $N$ are $F_R$-finite $F_R$-modules. We prove that when $(R,\m, K)$ is a regular local ring, $\Ext^1_{F_R}(M,N)$ is finite when $K=R/\m$ is separably closed and $M$ is supported only at $\m$. However, we provide examples to show that in general $\Ext^1_{F_R}(M,N)$ is not necessarily a finite set. We begin with some lemmas.
\begin{lemma}[{\it cf.} Proposition 3.1 in \cite{LyubeznikFModulesApplicationsToLocalCohomology}]
\label{correspondence between F_R and F_S-modules}
Let $S$ be a regular ring of characteristic $p>0$ and let $R\rightarrow S$ be a surjective homomorphism with kernel $I\subseteq R$. There exists an equivalence of categories between $F_R$-modules supported on $\Spec S=V(I)\subseteq \Spec R$ and $F_S$-modules. Under this equivalence the $F_R$-finite $F_R$-modules supported on $\Spec S=V(I)\subseteq \Spec R$ correspond to the $F_S$-finite $F_S$-modules.
\end{lemma}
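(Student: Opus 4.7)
I would establish the equivalence of categories by constructing functors
\[
\Phi: F_S\text{-mod} \longrightarrow \{F_R\text{-mod supported on } V(I)\} \quad\text{and}\quad \Psi
\]
in the reverse direction, then verifying that they are mutually quasi-inverse and preserve the notion of $F$-finiteness. Both categories can be profitably described via their generating morphisms (Definition \ref{generating morphism for F-mod}), so the task reduces to translating generating-morphism data between the two contexts.

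The key bridge between the two sides is the observation that for any $S$-module $N$, there is a canonical surjection
\[
F_R(N) = R^{(1)}\otimes_R N \twoheadrightarrow S^{(1)}\otimes_S N = F_S(N)
\]
arising from the quotient $R^{(1)}/I^{[p]} \twoheadrightarrow R^{(1)}/I = S^{(1)}$. Given an $F_S$-module $\mathcal{N}$ with a generating morphism $\beta_S: N_0 \to F_S(N_0)$, one lifts $\beta_S$ along this surjection to an $R$-linear map $\tilde\beta: N_0 \to F_R(N_0)$, and the colimit of the resulting $R$-level inductive system produces an $F_R$-module $\Phi(\mathcal{N})$ supported on $V(I)$. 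Conversely, $\Psi$ is defined by first reducing any $F_R$-module $M$ supported on $V(I)$ to one with a generating morphism from an $S$-module source, then descending along the bridging surjection to an $F_S$-generating morphism.

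The most delicate step is the reduction that $M$ supported on $V(I)$ admits a generating morphism $\beta: M_0 \to F_R(M_0)$ with $M_0$ an $S$-module. Starting from any generating morphism with $R$-module source, one exploits the $I$-power torsion of $M$ together with the identity $F_R^e(IM_0) = I^{[p^e]} F_R^e(M_0)$ (as submodules of $F_R^e(M_0)$, using that $r^{(e)}\otimes im = i^{p^e} r^{(e)}\otimes m$ via the tensor relation) and the containment $I^{[p^e]} \subseteq I^n$ for $p^e \geq n$; with enough Frobenius iterations the generating morphism can be modified so that its source is killed by $I$. The verifications $\Phi\circ\Psi\cong\id$ and $\Psi\circ\Phi\cong\id$ then follow by tracking the generating-morphism data, using that the choice of lift $\tilde\beta$ in $\Phi$ is washed out in the Frobenius colimit: two lifts differ by a map into the kernel $(I/I^{[p]})\otimes_S N_0$, which is absorbed after further iterations.

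Preservation of $F_R$-finiteness is then transparent: a finitely generated $R$-module source for the generating morphism yields a finitely generated $S$-module after quotienting by $I$, and conversely a finite $S$-module source is a fortiori a finite $R$-module source. The main obstacle I expect is the reduction to an $S$-module generating morphism, which requires a careful interplay between the Frobenius isomorphism $\theta_R$ and the Frobenius powers $I^{[p^e]}$ of $I$; this is the technical heart of Lyubeznik's Proposition 3.1, and I would refer the reader there for the precise execution.
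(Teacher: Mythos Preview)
The paper itself offers no proof of this lemma; it is quoted from Lyubeznik with the attribution in the header, and the reader is simply referred there. So there is nothing in the paper to compare against beyond the citation.

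That said, your sketch of the functor $\Phi$ contains a genuine error. You propose to build the $F_R$-generating morphism $\widetilde\beta\colon N_0\to F_R(N_0)$ by \emph{lifting} $\beta_S$ along the surjection $F_R(N_0)\twoheadrightarrow F_S(N_0)$. Such an $R$-linear lift typically does not exist. Take the basic case $S=K=R/\m$ with $\beta_S=\mathrm{id}\colon K\to F_S(K)=K$. Any $R$-linear map $K\to F_R(K)=R/\m^{[p]}$ must land in the $\m$-torsion of $R/\m^{[p]}$, which is the cyclic submodule generated by $(x_1\cdots x_d)^{p-1}$; but this submodule maps to \emph{zero} under $R/\m^{[p]}\twoheadrightarrow K$, so no lift of the identity can exist. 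Indeed the paper records, just before Example~\ref{example of F_R-module by shift}, that the correct generating morphism for $E$ sends $\overline{1}\mapsto\overline{x_1^{p-1}\cdots x_d^{p-1}}$, which is not a section of the quotient map at all.

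The repair is to run the identification the other way. For an $S$-module $N_0$ there is a natural isomorphism $F_S(N_0)\cong \Hom_R(S,F_R(N_0))$ (both sides computed via the regular sequence generating $I$), which realizes $F_S(N_0)$ as the $I$-torsion \emph{submodule} of $F_R(N_0)$ rather than as a quotient. Composing $\beta_S$ with this inclusion yields the correct $\widetilde\beta$; the remainder of your outline (the reduction on the $\Psi$ side to a generating morphism with $S$-module source, and the preservation of $F$-finiteness) then proceeds as in Lyubeznik's argument, to which both you and the paper ultimately defer.
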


\begin{lemma}[{\it cf.} Theorem 4.2(c)(e) in \cite{HochsterfinitenesspropertyofLyubeznikFmodule}]
\label{F_K-modules when K is a separably closed field} Let $K$ be a separably closed field. Then every $F_K$-finite $F_K$-module is isomorphic with a finite direct sum of copies of $K$ with the standard $F_K$-module structure. Moreover, $\Ext_{F_K}^1(K,K)=0$.
\end{lemma}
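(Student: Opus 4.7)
The plan is to exhibit every $F_K$-finite $F_K$-module $M$ as a direct sum of copies of the standard $K$, by showing that its subspace
\[
M^\theta := \{m \in M : \theta(m) = 1 \otimes m \text{ in } F(M)\}
\]
is an $\F_p$-vector space of dimension $\dim_K M$. First I would reduce to the case where the generating morphism $\beta : M_0 \to F(M_0)$ (with $M_0$ finite-dimensional over $K$) is an \emph{isomorphism}: quotient $M_0$ by the stable union $\bigcup_n \ker(\beta^n : M_0 \to F^n(M_0))$, which preserves the direct limit $M$ and makes $\beta$ injective, hence bijective. Under this reduction $M$ is canonically identified with $M_0$ and $\theta$ with $\beta$.

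Fix a basis $e_1,\ldots,e_n$ of $M_0$ and let $A = (a_{ij}) \in \mathrm{GL}_n(K)$ encode $\beta(e_j) = \sum_i a_{ij}(1 \otimes e_i)$. Then $\sum c_i e_i \in M^\theta$ iff $A\mathbf{c} = \mathbf{c}^{[p]}$ (entrywise $p$-th power), so $M^\theta = Y(K)$ for the affine $K$-scheme
\[
Y = \Spec K[c_1,\ldots,c_n]/(c_i^p - (A\mathbf{c})_i : 1 \leq i \leq n).
\]
The Jacobian of the defining equations is $-A$ in characteristic $p$, which is invertible, so $Y$ is \'etale over $K$. A straightforward Gr\"obner-basis calculation in total-degree order shows the relations $c_i^p - (A\mathbf{c})_i$ have initial ideal $(c_1^p,\ldots,c_n^p)$, giving $\dim_K K[Y] = p^n$; since $K$ is separably closed, this forces $|Y(K)| = p^n$, i.e.\ $\dim_{\F_p} M^\theta = n$.

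It remains to verify that an $\F_p$-basis $v_1,\ldots,v_n$ of $M^\theta$ is automatically $K$-linearly independent. Suppose otherwise and take $\sum c_i v_i = 0$ of minimal support normalized so $c_1 = 1$. Applying $\theta$ gives $\sum c_i(1 \otimes v_i) = 0$ (using $\theta(v_i) = 1 \otimes v_i$), while the Frobenius-semilinearity of $m \mapsto 1 \otimes m$ gives $\sum c_i^p(1 \otimes v_i) = 0$; subtracting and applying $\theta^{-1}$ yields $\sum (c_i - c_i^p) v_i = 0$, a strictly shorter dependence (its $c_1$ coefficient vanishes) unless every $c_i \in \F_p$, in which case the $\F_p$-independence of the $v_i$ is contradicted. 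Hence the $v_i$ form a $K$-basis, each $Kv_i$ is a sub-$F_K$-module isomorphic to the standard $K$, and $M \cong K^{\oplus n}$.

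For $\Ext^1_{F_K}(K,K) = 0$, any extension $0 \to K \to M \to K \to 0$ admits a $K$-basis $\{v,w\}$ with $v$ generating the submodule and $w$ a lift of the quotient generator; compatibility with the standard structures on sub and quotient forces the matrix of $\theta$ (relative to $\{v,w\}$ and $\{1 \otimes v,\, 1 \otimes w\}$) to be $\left(\begin{smallmatrix} 1 & a \\ 0 & 1 \end{smallmatrix}\right)$ for some $a \in K$. A splitting is a $w' = w - cv$ with $\theta(w') = 1 \otimes w'$, which unwinds to the Artin--Schreier equation $c^p - c + a = 0$; its derivative in $c$ is $-1$, so the polynomial is separable and has a root in the separably closed field $K$. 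The main technical content is the rank computation for the \'etale scheme $Y$; once that is in hand, both the Artin--Schreier descent and the $\Ext^1$ splitting are direct.
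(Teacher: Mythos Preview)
Your argument is correct. Both the classification of $F_K$-finite $F_K$-modules via the \'etale scheme $Y$ (this is essentially a form of Lang's theorem for the additive group with Frobenius twisted by $A$) and the Artin--Schreier splitting of extensions are sound; the Gr\"obner step is fine because the leading monomials $c_i^p$ are pairwise coprime, so Buchberger's criterion applies immediately.

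For comparison: the paper does not actually prove this lemma. It is quoted from Hochster's paper (Theorem~4.2(c)(e) there), and the only in-paper justification appears later, in Remark~\ref{finite conditions on Ext^1(R, R)}(2), where the $\Ext^1$ vanishing is recovered from the general formula $\Ext_{F_R}^1(R,M)\cong M/G_M$ of Theorem~\ref{structure of Ext(R,M)} together with the surjectivity of the Artin--Schreier map $x\mapsto x^p-x$ on a separably closed field. Your treatment of $\Ext^1$ is the same idea carried out by hand on a two-dimensional extension, so there is no real divergence there. Where you genuinely add something is in the first part: you supply a self-contained proof of the decomposition $M\cong K^{\oplus n}$, whereas the paper simply imports it. Your route (counting $\theta$-fixed vectors as $K$-points of an \'etale cover of degree $p^n$, then upgrading $\mathbb{F}_p$-independence to $K$-independence) is the standard one and matches what Hochster does, so it is not a different \emph{method}, but it does make your write-up independent of the external reference.
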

\begin{lemma}
\label{structure of F_R-modules supported at m}
Let $(R,\m,K)$ be a regular local ring with $K$ separably closed. Then every $F_R$-finite $F_R$-module supported only at $\m$ is isomorphic (as an $F_R$-module) with a finite direct sum of copies of $E=E(R/\m)$ (where $E$ is equipped with the standard $F_R$-module structure). Moreover, $\Ext_{F_R}^1(E,E)=0$.
\end{lemma}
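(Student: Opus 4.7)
The plan is to exploit the equivalence of categories from Lemma \ref{correspondence between F_R and F_S-modules}, specialized to the surjection $R \twoheadrightarrow R/\m = K$. This equivalence restricts to an equivalence between the full subcategory of $F_R$-finite $F_R$-modules supported only on $V(\m) = \{\m\}$ and the category of $F_K$-finite $F_K$-modules. Once this equivalence is in hand, both assertions of the lemma will follow by transporting the corresponding statements for $K$ from Lemma \ref{F_K-modules when K is a separably closed field}.

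The first key step is to identify the object in the category of $F_R$-finite $F_R$-modules supported at $\m$ corresponding to $K$ with its standard $F_K$-module structure. I expect this to be precisely $E = E(R/\m)$ with its standard $F_R$-module structure: both are generated from the simple $R/\m$-module, and the generating morphism $\beta\colon R/\m \to F(R/\m)$ sending $\bar{1}$ to $\overline{x_1^{p-1} \cdots x_d^{p-1}}$ given in Definition \ref{generating morphism for F-mod} is exactly what pulls back to the identity map $K \to F(K)$ on the $R/\m$-quotient side. With this match in place, Lemma \ref{F_K-modules when K is a separably closed field} gives that every $F_K$-finite $F_K$-module is a finite direct sum of copies of $K$, and transporting through the equivalence yields the first claim: every $F_R$-finite $F_R$-module supported only at $\m$ is a finite direct sum of copies of $E$ with the standard $F_R$-module structure.

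For the Ext vanishing, I use that an equivalence of abelian categories preserves Yoneda $\Ext$ groups in every degree (the Yoneda construction is purely categorical). Thus $\Ext_{F_R}^1(E,E) \cong \Ext_{F_K}^1(K,K)$, and the right-hand side vanishes by the second part of Lemma \ref{F_K-modules when K is a separably closed field}.

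The main obstacle is the verification that the equivalence of Lemma \ref{correspondence between F_R and F_S-modules} actually sends the standard $F_R$-module structure on $E$ to the standard $F_K$-module structure on $K$ (and hence sends finite direct sums to finite direct sums in a structure-preserving way). This is essentially a bookkeeping check: unwind the definition of the equivalence, apply it to the standard generating morphism on $R/\m$, and confirm the resulting map on $K$ is the identity. Once this identification is secured, everything else reduces formally to Lemma \ref{F_K-modules when K is a separably closed field}.
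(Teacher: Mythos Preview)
Your proposal is correct and matches the paper's own proof essentially line for line: apply the equivalence of Lemma~\ref{correspondence between F_R and F_S-modules} with $S=K$, $I=\m$, verify that $E$ with its standard $F_R$-structure corresponds to $K$ with its standard $F_K$-structure, and then invoke both parts of Lemma~\ref{F_K-modules when K is a separably closed field}. The only point you might make explicit is that any $F_R$-module extension of $E$ by $E$ is automatically supported at $\m$, so $\Ext^1$ computed in the full category of $F_R$-modules agrees with $\Ext^1$ in the subcategory where the equivalence applies; the paper leaves this implicit as well.
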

\begin{proof}
This is clear from Lemma \ref{correspondence between F_R and F_S-modules} (applied to $S=K$ and $I=\m$) and Lemma \ref{F_K-modules when K is a separably closed field} because it is straightforward to check that the standard $F_R$-module structure on $E$ corresponds to the standard $F_K$-module structure on $K$ via Lemma \ref{correspondence between F_R and F_S-modules}.
\end{proof}

\begin{theorem}
Let $(R,\m, K)$ be a regular local ring such that $K$ is separably closed and let $M$, $N$ be $F_R$-finite $F_R$-modules. Then $\Ext_{F_R}^1(M,N)$ is finite if $M$ is supported only at $\m$.
\end{theorem}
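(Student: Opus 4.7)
The plan is to reduce the statement in stages.  Since $M$ is $F_R$-finite and supported only at $\m$ and $K$ is separably closed, Lemma \ref{structure of F_R-modules supported at m} gives $M \cong E^k$ as $F_R$-modules for some $k \geq 0$, so $\Ext^1_{F_R}(M,N) \cong \Ext^1_{F_R}(E,N)^k$ and it suffices to prove $\Ext^1_{F_R}(E,N)$ is finite.  To continue, I would peel off the $\m$-torsion of $N$.  The submodule $\Gamma_\m(N)\subseteq N$ is an $F_R$-submodule: since the Frobenius functor is flat by Kunz it commutes with the functor $\Gamma_\m$, giving $F(\Gamma_\m(N))=\Gamma_\m(F(N))$, which forces the structure isomorphism $\theta_N$ to restrict to an isomorphism $\Gamma_\m(N)\xrightarrow{\sim}F(\Gamma_\m(N))$.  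Since $F_R$-finite modules are closed under $F_R$-subobjects (by Lyubeznik's theory), $\Gamma_\m(N)$ is $F_R$-finite and supported at $\m$, hence by Lemma \ref{structure of F_R-modules supported at m} a finite direct sum of copies of $E$, yielding $\Ext^1_{F_R}(E,\Gamma_\m(N))=0$.  The long exact sequence of $0\to\Gamma_\m(N)\to N\to N'\to 0$ then gives an injection $\Ext^1_{F_R}(E,N)\hookrightarrow\Ext^1_{F_R}(E,N')$, where $N':=N/\Gamma_\m(N)$ is $F_R$-finite and has no $\m$-torsion.

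To handle this reduced case, I would analyze an extension $0\to N'\to X\to E\to 0$ of $F_R$-modules by inspecting the $F_R$-submodule $\Gamma_\m(X)\subseteq X$.  Since $\Gamma_\m(N')=0$, the induced map $\Gamma_\m(X)\to E$ is injective, realizing $\Gamma_\m(X)$ as an $F_R$-submodule of $E$.  Because $K$ is separably closed, $E$ is $F_R$-simple: Lemma \ref{correspondence between F_R and F_S-modules} applied with $S=K$ and $I=\m$ identifies the $F_R$-submodules of $E$ with the $F_K$-submodules of $K$, and by Lemma \ref{F_K-modules when K is a separably closed field}, $K$ is $F_K$-simple.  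Hence $\Gamma_\m(X)$ is either $0$ or $E$; in the latter case the inclusion $E\hookrightarrow X$ splits the extension.  Thus the non-split equivalence classes in $\Ext^1_{F_R}(E,N')$ correspond precisely to those extensions whose middle term $X$ is $F_R$-finite with no $\m$-torsion.

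The main obstacle is to bound the number of such non-split equivalence classes.  My approach combines Hochster's Hom-finiteness (Theorem \ref{Hochster finiteness of Hom(M,N)}) with the finite-length structure of $F_R$-finite modules.  Every middle term $X$ arising here is $F_R$-finite, of $F_R$-length at most $\ell_{F_R}(N')+1$; for a fixed $F_R$-isomorphism class of $X$, an extension is described by a pair $(\iota,\pi)$ with $\iota\in\Hom_{F_R}(N',X)$ and $\pi\in\Hom_{F_R}(X,E)$, and two such pairs give equivalent extensions exactly when related by an element of $\Aut_{F_R}(X)$; all three of these sets are finite by Theorem \ref{Hochster finiteness of Hom(M,N)}, so only finitely many equivalence classes arise from each isomorphism class of $X$.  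The delicate remaining step, which I view as the crux of the argument, is to show that only finitely many $F_R$-isomorphism classes of such $X$ actually occur.  I would attempt this via the generating-morphism description (Definition \ref{generating morphism for F-mod}), representing any such $X$ by a pair $(X_0,\beta)$ with $X_0$ built from fixed finitely generated generators $N_0'$ of $N'$ and $E_0=R/\m$ of $E$, and then invoking the finiteness of the relevant Hom-sets in the $F_R$-category together with the $F_R$-simplicity of $E$ to cut the set of admissible $\beta$'s down to finitely many equivalence classes.  This last reduction is the most delicate ingredient and is where Hom-finiteness is used essentially.
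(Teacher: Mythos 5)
Your reductions are sound and partly parallel the paper's: the passage to $M\cong E^k$, the use of the long exact sequence to strip off the part of $N$ supported at $\m$, and the observation that a non-split extension $0\to N'\to X\to E\to 0$ must have $\Gamma_\m(X)=0$ (since $E$ is $F_R$-simple over a separably closed residue field) are all correct. The problem is that the step you yourself identify as ``the crux'' --- that only finitely many $F_R$-isomorphism classes of middle terms $X$ occur --- is not actually proved. The sketch you offer (parametrize $X$ by generating morphisms $\beta\colon X_0\to F(X_0)$ on a fixed finitely generated $X_0$ and ``cut the admissible $\beta$'s down to finitely many equivalence classes'' using Hom-finiteness) does not obviously close: the set of such $\beta$ is an infinite $R$-module, Hochster's theorem controls $\Hom$ in the $F_R$-category rather than the set of generating morphisms, and deciding when two $\beta$'s generate isomorphic $F_R$-modules is exactly the difficulty being deferred. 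As written, the argument has a genuine gap at its central point.

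The paper closes this gap with a localization trick that your outline is missing. After further reducing (via the long exact sequence and the finite $F_R$-length of $F_R$-finite modules) to the case where $N$ is a \emph{simple} $F_R$-module with $\Ass_R(N)=\{P\}$, $P\neq\m$, one picks $x\in\m\setminus P$. If the middle term $L$ has $\Ass_R(L)=\{P\}$ (your ``no $\m$-torsion'' case), then $L\hookrightarrow L_x\cong N_x$ because $E_x=0$; since $N_x$ is again $F_R$-finite and $R$ is local, Hochster's Theorem (Corollary 5.2(b)) says $N_x$ has only \emph{finitely many $F_R$-submodules}, which bounds the isomorphism classes of $L$. (The remaining case $\Ass_R(L)=\{P,\m\}$ forces $L\cong E\oplus N$.) Your final bookkeeping step --- finitely many equivalence classes of extensions per isomorphism class of $X$, via finiteness of $\Hom_{F_R}(N',X)$ and $\Hom_{F_R}(X,E)$ --- matches the paper and is fine; it is only the count of middle terms that needs the submodule-finiteness result applied to a localization, not a generating-morphism analysis.
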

\begin{proof}
Since $K$ is separably closed, by Lemma \ref{structure of F_R-modules supported at m} we know that $M$ is a finite direct sum of copies of $E$ in the category of $F_R$-modules. So it suffices to show that $\Ext_{F_R}^1(E, N)$ is finite. For every exact sequence of $F_R$-finite $F_R$-modules \[0\rightarrow N_1\rightarrow N_2\rightarrow N_3\rightarrow 0,\] the long exact sequence of $\Ext$ gives \[\Ext_{F_R}^1(E,N_1)\rightarrow\Ext_{F_R}^1(E,N_2)\rightarrow\Ext_{F_R}^1(E,N_3).\] So we immediately reduce to the case that $N$ is simple (since $R$ is local, every $F_R$-finite $F_R$-module has finite length by Theorem 3.2 in \cite{LyubeznikFModulesApplicationsToLocalCohomology}).

We want to show that $\Ext_{F_R}^1(E,N)$ is finite when $N$ is simple. There are two cases: $\Ass_R(N)=\m$ or $\Ass_R(N)=P\neq\m$ (by Theorem 2.12(b) in \cite{LyubeznikFModulesApplicationsToLocalCohomology}). If $\Ass_R(N)=\m$, then $N\cong E$ as $F_R$-modules by Lemma \ref{structure of F_R-modules supported at m}. So $\Ext_{F_R}^1(E, N)=\Ext_{F_R}^1(E,E)=0$ by Lemma \ref{structure of F_R-modules supported at m}.

If $\Ass_R(N)=P\neq\m$, by Yoneda's characterization of $\Ext$ groups, it suffices to show that we only have a finite number of isomorphism classes of short exact sequences \[0\rightarrow N\rightarrow L\rightarrow E\rightarrow 0\] of $F_R$-modules. We first show the number of choices of isomorphism classes for $L$ is finite. Say $\Ass_R(N)={P}\neq \m$, we have $P \in \Ass_R(L) \subseteq \{ P,\m \}$. If $\Ass_R(L)=\{ P,\m \}$, then $H_\m^0(L)\neq 0$ and it does not intersect $N$. So $H_\m^0(L)\oplus N$ is an $F_R$-submodule of $L$. Hence we must have $L\cong H_\m^0(L)\oplus N \cong E\oplus N$ since $L$ has length 2 as an $F_R$-module. If $\Ass_R(L)=\{P\}$, we can pick $x\in \m-P$. Localizing at $x$ gives a short exact sequence \[0\rightarrow N_x\rightarrow  L_x\rightarrow E_x\rightarrow 0.\] But $E_x=0$, so we get $N_x\cong L_x$ as $F_R$-module. Since $x$ is not in $P$, we have $L\hookrightarrow L_x$ as $F_R$-module. That is, $L$ is isomorphic to an $F_R$-submodule of $L_x$, hence is isomorphic to an $F_R$-submodule of $N_x$. But $N_x$ is $F_R$-finite by Proposition 2.9(b) in \cite{LyubeznikFModulesApplicationsToLocalCohomology}, so it only has finitely many $F_R$-submodules by Theorem \ref{Hochster finiteness of Hom(M,N)}. This proves that the number of choices of isomorphism classes for $L$ is finite.

Because the number of choices of isomorphism classes for $L$ is finite, and for each $F_R$-finite $F_R$-module $L$, $\Hom_{F_R}(N, L)$ is always finite by Theorem \ref{Hochster finiteness of Hom(M,N)}. It follows that the number of isomorphism classes of short exact sequences $0\rightarrow N\rightarrow L\rightarrow E\rightarrow 0$ is finite.
\end{proof}

If $M$ is an $F_R$-module with structure morphism $\theta_M$, for every $x\in M$ we use $x^p$ to denote $\theta_M^{-1}(1\otimes x)$. Notice that when $M=R$ with the standard $F_R$-module structure, this is exactly the usual meaning of $x^p$. We let $G_M$ denote the set $\{x^p-x|x\in M\}$.  It is clear that $G_M$ is an abelian subgroup of $M$.

\begin{theorem}
\label{structure of Ext(R,M)}
Let $R$ be a regular ring. Giving $R$ the standard $F_R$-module structure, we have $\Ext_{F_R}^1(R, M)\cong M/G_M$ as an abelian group for every $F_R$-module $M$.
\end{theorem}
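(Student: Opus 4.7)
The plan is to compute $\Ext^1_{F_R}(R, M)$ directly via Yoneda's characterization of $\Ext^1$ as equivalence classes of short exact sequences of $F_R$-modules $0 \to M \to L \to R \to 0$, and to explicitly parametrize these classes by elements of $M/G_M$. Since $R$ is free of rank one as an $R$-module, any such sequence splits in the category of $R$-modules. I would fix an $R$-linear section $x \in L$ mapping to $1 \in R$, giving $L = M \oplus Rx$. Compatibility of $\theta_L$ with $\theta_M$ and the standard structure $\theta_R$ on $R$ forces $\theta_L(x) \in F(L) = F(M) \oplus F(Rx)$ to have second component $1 \otimes x$; hence $\theta_L(x) = \theta_M(y) + 1 \otimes x$ for a unique $y \in M$. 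Conversely, any $y \in M$ defines an $F_R$-structure on $M \oplus R$ fitting into such an extension, which I would denote $L_y$.

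The key step is to analyze how the parameter $y$ depends on the choice of section. Replacing $x$ by $x' = x + m_0$ for some $m_0 \in M$, a direct computation in $F(L)$ after re-expressing in the new splitting $F(M) \oplus F(Rx')$ yields $\theta_M(y') = \theta_M(y) + \theta_M(m_0) - 1 \otimes m_0$. Using the crucial identity $1 \otimes m_0 = \theta_M(m_0^p)$, which comes directly from the definition $m_0^p := \theta_M^{-1}(1 \otimes m_0)$, this rearranges to $y' = y - (m_0^p - m_0)$. Thus the class $[y] \in M/G_M$ is independent of the section and is an invariant of the extension.

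To finish the proof I would check that $[y]$ induces a bijection between Yoneda equivalence classes of extensions and $M/G_M$. If $\phi \colon L \to L'$ is an $F_R$-equivalence of extensions, then choosing a section $x$ of $L$ and setting $x' = \phi(x)$ gives a section of $L'$ with the same parameter, since $\phi$ is $F_R$-linear; so the class $[y]$ is an equivalence invariant. Conversely, $L_y$ and $L_{y'}$ are equivalent whenever $y - y' \in G_M$ by the change-of-section computation, and surjectivity is immediate. Finally, one verifies that the bijection is a group homomorphism with respect to Baer sum: the Baer sum of $L_{y_1}$ and $L_{y_2}$ is equivalent to $L_{y_1 + y_2}$, and $L_0$ is the split extension corresponding to $0 \in M/G_M$.

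The main subtlety lies in the change-of-section computation, where one must carefully distinguish $\theta_M(m_0) \in F(M)$ from $1 \otimes m_0 = \theta_M(m_0^p) \in F(M)$; a naive identification of these would wrongly collapse $\Ext^1_{F_R}(R, M)$ to zero, and would contradict the nonvanishing already observed for $M = R^\infty$ in Example~\ref{example of F_R-module by shift}. It is precisely this distinction, arising from the characteristic-$p$ action $m \mapsto m^p$, that introduces the Artin--Schreier-type map $m \mapsto m^p - m$ and produces the cokernel $M/G_M$ rather than the trivial quotient.
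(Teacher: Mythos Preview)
Your proposal is correct and follows essentially the same approach as the paper: both parametrize extensions by splitting $L \cong M \oplus R$ as $R$-modules, encode $\theta_L$ by an element of $M$, and compute that changing the splitting (equivalently, applying an equivalence of extensions) shifts this element by something of the form $m_0^p - m_0$. Your treatment is slightly more explicit in separating the change-of-section computation from the equivalence-of-extensions step and in mentioning the Baer sum verification, but the underlying argument is the same.
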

\begin{proof}
By Yoneda's characterization of $\Ext$ groups, an element in $\Ext_{F_R}^1(R, M)$ can be represented by an exact sequence of $F_R$-modules \[0\rightarrow M\rightarrow L\rightarrow R\rightarrow 0.\] It is clear that $L\cong M\oplus R$ as $R$-module. Moreover, one can check that the structure isomorphism $\theta_L$ composed with $\theta_M^{-1}\oplus\theta_R^{-1}$ defines an isomorphism \[M\oplus R\xrightarrow{\theta_L} F(M)\oplus F(R)\xrightarrow{\theta_M^{-1}\oplus\theta_R^{-1}} M\oplus R\] which sends $(y, r)$ to $(y+rz, r)$ for every $(y, r)\in M\oplus R$ and for some $z\in M$. Hence, giving a structure isomorphism of $L$ is equivalent to giving some $z\in M$. Therefore, $\theta_L$ is determined by an element $z\in M$. Two exact sequences with structure isomorphism $\theta_L$, $\theta'_L$ are in the same isomorphism class if and only if there exists a map $g$: $L\rightarrow L$, sending $(y, r)$ to $(y+rx, r)$ for some $x\in M$ such that \[(1\otimes g)\circ\theta_L=\theta'_L\circ g .\]  Now we apply $\theta_M^{-1}\oplus\theta_R^{-1}$ on both side. If $\theta_L$, $\theta'_L$ are determined by $z_1$ and $z_2$ respectively, a direct computation gives that \[(\theta_M^{-1}\oplus\theta_R^{-1})\circ(1\otimes g)\circ\theta_L(y,r)=(y+rz_1+rx^p, r)\] while \[(\theta_M^{-1}\oplus\theta_R^{-1})\circ\theta'_L\circ g(y,r)=(y+rz_2+rx, r).\] So $\theta_L$ and $\theta'_L$ are in the same isomorphism class if and only if there exists $x\in M$ such that \[z_2-z_1=x^p-x .\] So $\Ext_{F_R}^1(R, M)\cong M/G_M$ as an abelian group.
\end{proof}

Before we use Theorem \ref{structure of Ext(R,M)} to study examples, we make the following remark. I would like to thank the referee for his suggestions on this remark.
\begin{remark}
\label{finite conditions on Ext^1(R, R)}
\begin{enumerate}
\item In the case that $R$ is a regular ring which is {\it $F$-finite} and {\it local}. By Theorem \ref{equivalence of category of F-mod and unit right mod}, we can identify the category of $F_R$-modules with the category of unit right $R\{F\}$-modules ($\omega_R=R$ is unique). And by Theorem \ref{ext groups are the same}, we can compute $\Ext^1_{F_R}(R, M)\cong\Ext^1_{uR\{F\}}(R, M)\cong\Ext^1_{R\{F\}}(R, M)$ by taking the right $R\{F\}$-projective resolution of $R$ and then applying $\Hom_{R\{F\}}(-, M)$. Note that one right $R\{F\}$-projective resolution of $R$ is given by \[0\rightarrow R^{(1)}\otimes_RR\{F\}\rightarrow R\{F\}\rightarrow R\rightarrow 0\] as in Lemma \ref{two-step resolution of right R[F]-module}. Thus in this case one can give another proof of Theorem \ref{structure of Ext(R,M)}, we leave the details to the reader.
\item When $(R,\m)$ is a strict Henselian local ring (e.g., $(R,\m)$ is a complete local ring with algebraically closed residue field), the Artin-Schreier sequence \[0\rightarrow \mathbb{F}_p\rightarrow R\xrightarrow{x^p-x} R\rightarrow 0\] is exact in the Zariski topology, which shows that $G_R=R$, and hence $\Ext_{F_R}^1(R, R)=0$ when $R$ is a strict Henselian local ring. In particular, applying this to $R=K$ a separably closed field, we recover Lemma \ref{F_K-modules when K is a separably closed field}.
\end{enumerate}
\end{remark}

Now we give some examples to show that, in general, $\Ext_{F_R}^1(R, M)\cong M/G_M$ is not necessarily finite, even in simple cases.
\begin{example}
Let $R=k(t)$ or $k[t]_{(t)}$ with $k$ an algebraically closed field. We will prove that $\Ext_{F_R}^1(R, R)$ is infinite in both cases. By Theorem \ref{structure of Ext(R,M)}, it suffices to show that for $a, b\in k$ ($a, b\neq 0$ in the second case), $\D\frac{1}{t-a}$ and $\D\frac{1}{t-b}$ are different in $R/G_R$ whenever $a\neq b$. Otherwise there exists $\D\frac{h(t)}{g(t)}\in R$ with $h(t), g(t)\in k[t]$ ($g(t)$ is not divisible by $t$ in the second case) and $\gcd(h(t), g(t))=1$ such that
\begin{equation*}
\frac{1}{t-a}-\frac{1}{t-b}=\frac{h(t)^p}{g(t)^p}-\frac{h(t)}{g(t)}
\end{equation*}
which gives
\begin{equation}
\label{equation in the example of non-finiteness of ext}
\frac{a-b}{t^2-(a+b)t+ab}=\frac{h(t)^p-h(t)\cdot g(t)^{p-1}}{g(t)^p}.
\end{equation}
Since $\gcd(h(t), g(t))=1$, $\gcd(h(t)^p-h(t)\cdot g(t)^{p-1}, g(t)^p)=1$. So from (\ref{equation in the example of non-finiteness of ext}) we know that $g(t)^p|(t^2-(a+b)t+ab)$. This is clearly impossible.
\end{example}

\begin{example}
\label{E not injective as F_R-module}
Let $(R,\m, K)$ be a regular local ring of dimension $d\geq 1$. Let $E=E(R/\m)$ be the injective hull of the residue field. We will show that $\Ext_{F_R}^1(R, E)$ is not zero and is in fact infinite when $K$ is infinite. In particular, $E=E(R/\m)$, though injective as an $R$-module, is {\it not} injective as an $F_R$-module (with its standard $F_R$-structure) when $\dim R\geq 1$.

Recall that $E=\D\varinjlim_n\frac{R}{(x_1^n,\dots,x_d^n)}$. So every element $z$ in $E$ can be expressed as $(r; x_1^n,\dots,x_d^n)$ for some $n\geq 1$ (which means $z$ is the image of $r$ in the $n$-th piece in this direct limit system). By Theorem \ref{structure of Ext(R,M)}, $\Ext_{F_R}^1(R, E)\cong E/G_E$. I claim that any two different socle elements $u_1$, $u_2$ are different in $E/G_E$. Suppose this is not true, we have:
\begin{equation}
\label{relation on z and u}
u_1-u_2=z^p-z
\end{equation}
in $E$. Since $u_1-u_2$ is a nonzero element in the socle of $E$, we may write $u_1-u_2=(\lambda; x_1,\dots,x_d)$ for some $\lambda\neq 0$ in $K$. Say $z=(r; x_1^n,\dots,x_d^n)$ with $n$ minimum. Then (\ref{relation on z and u}) will give
\[(r; x_1^n,\dots,x_d^n)=(\lambda; x_1,\dots,x_d)+(r^p; x_1^{np},\dots,x_d^{np}).\]
This will give us
\begin{equation}
\label{equation on r}
r^p+\lambda(x_1\cdots x_d)^{np-1}-r (x_1\cdots x_d)^{np-n}\in (x_1^{np},\dots,x_d^{np}).
\end{equation}

If $n=1$, then $0\neq z\in \soc(E)$, hence $r$ is a nonzero unit in $R$. But (\ref{equation on r}) shows that $r^p\in (x_1,\dots,x_d)$ which is a contradiction.

If $n\geq2$, we have $np-1\geq np-n\geq p$. We know from (\ref{equation on r}) that for every $1\leq i\leq d$, we have $r^p\in (x_1^{np},\dots,x_{i-1}^{np}, x_i^{p},x_{i+1}^{np}, \dots,x_d^{np})$.  Hence $r\in (x_1^n,\dots,x_{i-1}^n, x_i, x_{i+1}^n,\dots, x_d^n)$ for every $1\leq i\leq d$. Taking their intersection, we get that $r\in (x_1\cdots x_d, x_1^n,\dots,x_d^n)$. That is, mod $(x_1^n,\dots,x_d^n)$, we have $r=(x_1\cdots x_d)r_0$. But then we have $z=(r_0; x_1^{n-1},\dots, x_d^{n-1})$ contradicting our choice of $n$.

Therefore we have proved that any two different socle elements $u_1$, $u_2$ are different in $\Ext_{F_R}^1(R, E)=E/G_E$. This shows that $\Ext_{F_R}^1(R, E)\neq 0$ and is infinite when $K$ is infinite.
\end{example}

\section*{Acknowledgement} I would like to thank Mel Hochster and Gennady Lyubeznik for reading a preliminary version of the paper and for their helpful and valuable comments. I would like to thank David Speyer for some helpful discussions on Yoneda's $\Ext$ groups. I am grateful to Vasudevan Srinivas for some helpful discussions on the global dimension of $F$-modules. Finally, I would like to express my deep gratitude to the anonymous referee whose valuable suggestions improved the paper considerably. In particular, I would like to thank the referee for suggesting the definition of unit right $R\{F\}$-modules and for pointing out a different proof of Theorem \ref{ext groups are the same} which leads to Theorem \ref{the category of F-mod has finite global dimension d+1} and globalizes the original results, and also for his comments on Remark \ref{finite conditions on Ext^1(R, R)}.

\bibliographystyle{skalpha}
\bibliography{CommonBib}

\def\cprime{$'$} \def\cprime{$'$}
  \def\cfudot#1{\ifmmode\setbox7\hbox{$\accent"5E#1$}\else
  \setbox7\hbox{\accent"5E#1}\penalty 10000\relax\fi\raise 1\ht7
  \hbox{\raise.1ex\hbox to 1\wd7{\hss.\hss}}\penalty 10000 \hskip-1\wd7\penalty
  10000\box7}
\providecommand{\bysame}{\leavevmode\hbox to3em{\hrulefill}\thinspace}
\providecommand{\MR}{\relax\ifhmode\unskip\space\fi MR}
\providecommand{\MRhref}[2]{%
  \href{http://www.ams.org/mathscinet-getitem?mr=#1}{#2}
}
\providecommand{\href}[2]{#2}
\begin{thebibliography}{Kun69}

\bibitem[BB11]{BlickleBoeckleCartierModulesFiniteness}
{\sc M.~Blickle and G.~B\"ockle}: \emph{Cartier modules: finiteness results},
  J. Reine Angew. Math \textbf{661} (2011), 85--123, arXiv:0909.2531.

\bibitem[EK04]{EmertonKisinRiemannHilbertcorrespondenceforunitFcrystals}
{\sc M.~Emerton and M.~Kisin}: \emph{The {R}iemann-{H}ilbert correspondence for
  uint {$F$}-crystals}, Ast{\'{e}}risque \textbf{293} (2004), vi+257.

\bibitem[Har66]{HartshorneResidues}
{\sc R.~Hartshorne}: \emph{Residues and duality}, Lecture notes of a seminar on
  the work of A. Grothendieck, given at Harvard 1963/64. With an appendix by P.
  Deligne. Lecture Notes in Mathematics, No. 20, Springer-Verlag, Berlin, 1966.
  {\sf\scriptsize MR0222093 (36 \#5145)}

\bibitem[Hoc07]{HochsterfinitenesspropertyofLyubeznikFmodule}
{\sc M.~Hochster}: \emph{Some finiteness properties of {L}yubeznik's
  {$F$}-modules}, Contemp. Math. \textbf{448} (2007), 119--127.

\bibitem[Kun69]{KunzCharacterizationsOfRegularLocalRings}
{\sc E.~Kunz}: \emph{Characterizations of regular local rings for
  characteristic {$p$}}, Amer. J. Math. \textbf{91} (1969), 772--784.
  {\sf\scriptsize MR0252389 (40 \#5609)}

\bibitem[Lyu97]{LyubeznikFModulesApplicationsToLocalCohomology}
{\sc G.~Lyubeznik}: \emph{{$F$}-modules: applications to local cohomology and
  {$D$}-modules in characteristic {$p>0$}}, J. Reine Angew. Math. \textbf{491}
  (1997), 65--130. {\sf\scriptsize MR1476089 (99c:13005)}

\bibitem[Wei94]{WeibelHomological}
{\sc C.~A. Weibel}: \emph{An introduction to homological algebra}, Cambridge
  Studies in Advanced Mathematics, vol.~38, Cambridge University Press,
  Cambridge, 1994. {\sf\scriptsize MR1269324 (95f:18001)}

\end{thebibliography}
\end{document}